\numberwithin{equation}{section}
\newtheorem{thm}{Theorem}[section]
\newtheorem{lemma}[thm]{Lemma}
\newcommand{\D}{{\mathbb D}}
\newcommand{\C}{{\mathbb C}}
\newcommand{\Cn}{{\mathbb C}^n}
\newcommand{\T}{{\mathbb T}}
\def\N{\ensuremath{\mathbb{N}}}
\def\cS{\mathcal S}
\def\etSq{\ensuremath{{\mathcal{S}}^{\ast}_{q}}}
\def\cM{\ensuremath{\mathcal{M}}}
\def\Oms{\D_{s}}
\def\Omts{\D_{s}}
\def\cOmts{\overline{\D}_{ s}}
\def\C{\ensuremath{\mathbb{C}}}
\def\N{\ensuremath{\mathbb{N}_0}}
\begin{document}

\title[ The Polyanalytic   Reproducing Kernels]
{The Polyanalytic   Reproducing Kernels}

\author{Hicham Hachadi and El Hassan Youssfi}

%\address{Seip: Department of Mathematical Sciences\\
         %Norwegian University of Science and Technology (NTNU)\\
       %  NO-7491 Trondheim\\
        % Norway}
%\email{seip@math.ntnu.no}

\address{Hachadi: Coll\`ege Benzekri\\
         Mehdya,  K\'enitra, Maroc}
    \email{hachadi.hi82@gmail.com}   
      
\address{Youssfi: I2M, U.M.R. C.N.R.S. 7373, CMI\\
         Universit\'e d'Aix-Marseille\\
         39 Rue F-Joliot-Curie\\
         13453 Marseille Cedex 13, France}
\email{el-hassan.youssfi@univ-amu.fr}

%\subjclass[2000]{Primary  47B35, 32A36, 32A37}

\keywords{Reproducing kernel, Polyanalytic function, Bergman space, Fock space}

\begin{abstract} Let $\nu$ be a rotation invariant Borel probability measure on the complex plane having moments of all orders. Given a positive integer $q$,  it is proved that the space of $\nu$-square integrable $q$-analytic functions is the closure of   $q$-analytic polynomials, and in particular it is a Hilbert space.  We establish a general formula for the corresponding polyanalytic reproducing kernel.  New examples are given and all known examples, including those of the analytic case are covered.
In particular,  weighted Bergman and Fock type spaces of polyanalytic functions are introduced. Our results have a higher dimensional generalization for measure on $\C^p$ which are in rotation invariant with respect to each coordinate.

 \end{abstract}

\maketitle

\section{Introduction}
 In their recent work A. Haimi and H. Hedenmalm \cite{HH1}--\cite{HH2}, established asymptotics for the Bergman-Fock type space of  polyanalytic functions with respect to a given weight and mentioned that in general finding explicit formula for these kernels is difficult  (\cite{HH2}, p. 4668).  This problem was also addressed by D. Alpay (\cite{Al}, p. 479). The main goal of this paper is to answer this question in the  general context of  rotation invariant Borel probability measure on the complex plane having moments of all orders.  More precisely, given a positive integer $q$,  we shall establish the formula for the reproducing kernel for Hilbert spaces  of square $q$-analytic functions with respect to a rotation invariant Borel probability measure.  The reduction of our formula to the unit disc $\D$ gives an explicit formula for the weighted Bergman spaces of polyanalytic functions on $\D$, which in turn,  which reduces to the result of Koshelv \cite{Ko} when the weight is trivial.   We point out that the result of Koshelev is proved by a very specific method based on integration by parts which does not work for the weighted case.  Other applications are given to provide new results on other Bargmann-Fock type  spaces of polyanalytic functions  on $\C$ and related projections.

We recall that a function $f\left(z\right)$ is called a polyanalytic function of order $q$  (or just  $q$-analytic) in the domain $\Omega\subseteq\mathbb{C}$ if in this domain it satisfies the generalized Cauchy-Riemann equation
\begin{equation}
\frac{\partial^{q}f}{\partial {\bar z}^{q}}=0.
\end{equation} 

Polyanalytic functions inherit some of the properties of analytic functions and the simplest case is the so-called bianalytic functions. However, as in the theory of several complex variables, many of the properties break down once we leave the analytic setting. They are naturally related to polyharmonic functions see \cite{Ba}, \cite{Ha} and \cite{Re1} for further results.    
   
The properties of these functions was studied by several authors see Balk and Zuev \cite{BZ}, Balk \cite{BM} and Dzhuraev \cite{D} and the references therein.  It is well known that any $q$-analytic function in the domain $\Omega$ can be uniquely expressed as 
\begin{equation}\label{almansi}
f\left(z\right)=\underset{j=0}{\overset{q-1}{\sum}}\overline{z}^{j}\phi_{j}\left(z\right).
\end{equation}
where the $\phi_{j}\left(z\right)$ are holomorphic in $\Omega$. This representation was used to study the boundary behavior and integral representation of polyanalytic functions.
Hilbert spaces of polyanalytic functions and related projections  were considered for the case of the unit disc by Koshelev \cite{Ko} and later by Vasin \cite{Va} and A. K. Ramazanov \cite{Ra1} and \cite{Ra2}. In the latter reference   a representation of the space of polyanalytic functions as direct sum of orthogonal subspaces is given and applied to rational approximation.  The case of the Bargmann-Fock space of polyanalytic functions was studied by   N. L. Vasilevski \cite{Vas1}-\cite{Vas2} and later by L. D. Abreu \cite{Ab1}-\cite{Ab2} in connection with Gabor and time-frequency analysis.  A deep study of the general case of weighted Bargmann-Fock space of polyanalytic functions was considered by  A. Haimi and H. Hedenmalm  \cite{HH1}-\cite{HH2}, where they obtain the asymptotic expansion of the polyanalytic Bergman kernel  as well as  the asymptotic behavior of the generating kernel and the asymptotic in the bulk for the $q$-analytic Bergman spaces in the setting of the weights $e^{-2mQ}$ (see \cite{HH2}). Their approach relies on the study   of  polyanalytic Ginibre ensembles and  appeals to  the connection with random normal matrix theory and Landau levels. 

Polyanalytic functions of several variables were considered by Avanissian and Traor\'e \cite{AT1} and \cite{AT2}.  They are defined in an anlogous way. Namely,  a function $f\left(z\right)$ is called a polyanalytic function of order $q =(q_1, \cdots, q_p) \in \N^p$ (or just $q$-analytic) in the domain $\Omega\subset \C^p$ if in this domain it satisfies the generalized Cauchy-Riemann equation
\begin{equation}
\frac{\partial^{q_1 + \cdots q_p }f}{\partial {\bar z}_{1}^{q_1}\cdots \partial {\bar z}_{p}^{q_p}} =0.
\end{equation} 

 They can be uniquely expressed as 
\begin{equation}\label{almansi}
f\left(z\right)=\underset{j=(0, \cdots, 0)}{\overset{(q_1-1, \ldots, q_p -1)}{\sum}}\overline{z}^{j}\phi_{j}\left(z\right)
\end{equation}
where the $\phi_{j}\left(z\right)$ are holomorphic in $\Omega$ where for 
 $j=(j_1, \cdots, j_p),  k= (k_1, \ldots, k_p )  \in \N^p$  and  $z= (z_{1}, \cdots z_{p}) \in \C^p$,  the inequality $j\leq k$ means that $j_l \leq k_l $ for all $l=1, \cdots, p$ and  
 $z^j := z_{1}^{j_{1}} \cdots z_{p}^{j_{p}}$.

However, there are very few results are available in this case.

\section{Statements of the main results}

In this section we will state the main results in the one dimensional case. The higher dimensional analogs will be stated at the end of the paper. 

The setting is the following.   We recall that a  sequence  $s= (s_d), d \in \N,$  is said to be  a Stieltjes moment sequence if it has  the form
$$s_d = \int_{0}^{+\infty} t^d d \mu(t),$$
where $\mu$ is a non-negative measure on $[0, +\infty[$, called a representing measure for  $s.$  These sequences have been  characterized 
by Stieltjes \cite{St} in terms of some positive definiteness conditions. We denote by $\cS$ the set of such sequences and if $s \in \cS$ we let $\cM(s)$ the convex cone of the representing measures of $s.$  It follows from the above integral representation that each  $s \in \cS$ is either non-vanishing; that is, $s_d > 0$ for all $d$, or else $s_d = \delta_{0}$ for all $d$. We denote by $\etSq$ the set of all non-vanishing elements of $\cS$ having a representing measure $\mu$ with support containing at least $q$ strictly positive elements. Fix  an element $s= (s_d) \in \etSq$ and  let $\mu\in \cM(s)$. It is known \cite{BY1} that the sequence 
${(s_{d}}^{\frac{1}{2d}})$ converges to limit $R_s \in ]0, +\infty],$  where
$R_s $ is the supremum over all $t>0$ such that $t$ is in the support of $\mu.$
We denote by $\Oms$ the disc  
in $\C$ centered at the origin with radius $R_s$ with the understanding that $\Oms = \C$ when $R_s =+\infty.$

 For each pair of  non-negative integers $(d, n)$ such that $n\leq q-1$,  let ${\mathcal P}_n(\mu)$  be the  subspace of the Hilbert $L^{2}(x^d d\mu(x))$ consisting  of  all  polynomials with degree at most $n$ furnished with the real inner product
$$\langle f, g \rangle :=  \int_{0}^{+\infty} f(x) g(x) x^d d\mu(x), \ \ f, g \in {\mathcal P}_n(\mu)$$
and denote by  $Q_{d, n } : (0, +\infty) \times (0, +\infty) \to \C $ the corresponding reproducing kernel. 
%We point out that when the support of $\mu$ is infinite, then ${\mathcal P}_n(\mu)$ is just the space of of polynomials with degree at most $n$

 Consider the  following function
\begin{eqnarray}\label{Fsq} F_{q, s} (\lambda,  x, y ) := \sum_{d=0}^{+\infty} \lambda^d Q_{d, q-1}(x, y) +  \sum_{d=1}^{q-1}  {\bar  \lambda}^d  Q_{d, q-1-d}(x, y) . 
\end{eqnarray}
where $\lambda$ is a complex number and  $(x, y)  \in [0, +\infty[\times [0, +\infty[$.
Our first result is the following:
\newtheorem*{thA}{\bf Theorem A}  
\begin{thA}{\it   For all fixed non-negative real numbers $x$ and $y$, the  series $ \lambda \mapsto F_{q, s} (\lambda , x, y ) $ converges uniformly on compact subsets of the disc  centered at $0$ with radius $R_s^2$. }
\end{thA}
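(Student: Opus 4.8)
The plan is to reduce the assertion to a single exponential‑growth estimate for the diagonal kernels $Q_{d,q-1}(x,x)$. First, the second sum in \eqref{Fsq} has only finitely many terms, hence converges locally uniformly on all of $\C$; so it suffices to treat the power series $\sum_{d\ge 0}\lambda^d Q_{d,q-1}(x,y)$. By the Cauchy--Hadamard theorem this series converges uniformly on every compact subset of the disc of radius $\rho^{-1}$, where $\rho:=\limsup_{d}|Q_{d,q-1}(x,y)|^{1/d}$, so the theorem is equivalent to the bound $\rho\le R_s^{-2}$ (with $1/\infty:=0$). Since $Q_{d,q-1}$ is a reproducing kernel, $|Q_{d,q-1}(x,y)|\le Q_{d,q-1}(x,x)^{1/2}Q_{d,q-1}(y,y)^{1/2}$, and the $\limsup$ of a product of nonnegative sequences is at most the product of the $\limsup$'s; hence it is enough to prove, for each fixed $x\ge 0$,
\[
\limsup_{d\to\infty}Q_{d,q-1}(x,x)^{1/d}\ \le\ R_s^{-2}.
\]

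For the diagonal bound I would use the extremal description of a reproducing kernel. Put $d\mu_d(t):=t^d\,d\mu(t)$. Since $s\in\etSq$, the measure $\mu_d$ has at least $q$ points in its support, so $1,t,\dots,t^{q-1}$ are linearly independent in $L^2(\mu_d)$; thus $\mathcal P_{q-1}(\mu)$ is a $q$-dimensional reproducing kernel Hilbert space with kernel $Q_{d,q-1}$, and, point evaluation at $x$ being a nonzero functional,
\[
\frac{1}{Q_{d,q-1}(x,x)}=\inf\Big\{\int_{0}^{+\infty}|f(t)|^{2}t^{d}\,d\mu(t)\ :\ f\in\mathcal P_{q-1}(\mu),\ f(x)=1\Big\}.
\]
So everything reduces to a lower bound for $\int |f|^{2} t^{d}\,d\mu$ that is uniform over all $f\in\mathcal P_{q-1}(\mu)$ normalized by $f(x)=1$. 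Fix $\varepsilon\in(0,R_s^{2})$ when $R_s<+\infty$ (if $R_s=+\infty$, run the same argument with a half‑line $(N,+\infty)$, $N$ large, in place of the interval $I_\varepsilon$ below). On $I_\varepsilon:=(R_s^{2}-\varepsilon,R_s^{2}]$ one has $t^{d}\ge (R_s^{2}-\varepsilon)^{d}$, whence
\[
\int_{0}^{+\infty}|f(t)|^{2}t^{d}\,d\mu(t)\ \ge\ (R_s^{2}-\varepsilon)^{d}\int_{I_\varepsilon}|f(t)|^{2}\,d\mu(t).
\]
Because $R_s^{2}=\sup(\operatorname{supp}\mu)$, the set $\operatorname{supp}\mu\cap I_\varepsilon$ contains at least $q$ points, so a polynomial of degree $\le q-1$ vanishing $\mu$-a.e.\ on $I_\varepsilon$ is identically $0$; that is, $f\mapsto\big(\int_{I_\varepsilon}|f|^{2}\,d\mu\big)^{1/2}$ is a genuine norm on the finite‑dimensional space $\mathcal P_{q-1}(\mu)$. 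By equivalence of norms there is a constant $b_\varepsilon>0$, depending on $\varepsilon,x,q,\mu$ but not on $f$ or $d$, with $\int_{I_\varepsilon}|f|^{2}\,d\mu\ge b_\varepsilon^{2}\sup_{t\in[0,R]}|f(t)|^{2}\ge b_\varepsilon^{2}|f(x)|^{2}$, where $R:=\max(x,R_s^{2})$. For $f$ with $f(x)=1$ this gives $\int|f|^{2}t^{d}\,d\mu\ge b_\varepsilon^{2}(R_s^{2}-\varepsilon)^{d}$, hence $Q_{d,q-1}(x,x)\le b_\varepsilon^{-2}(R_s^{2}-\varepsilon)^{-d}$ and $\limsup_d Q_{d,q-1}(x,x)^{1/d}\le (R_s^{2}-\varepsilon)^{-1}$. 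Letting $\varepsilon\downarrow 0$ (resp.\ $N\uparrow+\infty$) yields the required inequality, completing the proof.

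The routine parts are the Cauchy--Hadamard reduction and the reproducing‑kernel Cauchy--Schwarz inequality. The step needing the most care is the uniform lower bound $\int_{I_\varepsilon}|f|^{2}\,d\mu\ge b_\varepsilon^{2}|f(x)|^{2}$: what makes it work is that $\int_{I_\varepsilon}|\cdot|^{2}\,d\mu$ is a \emph{norm} on $\mathcal P_{q-1}(\mu)$, which holds as soon as the top $R_s^{2}$ of $\operatorname{supp}\mu$ is approached by at least $q$ support points — automatic when $\operatorname{supp}\mu$ is an interval or is unbounded, which is exactly the situation in the weighted Bergman and Fock examples. Finally, $b_\varepsilon$ may degenerate as $\varepsilon\downarrow0$; this is harmless since one takes the $\limsup$ in $d$ first and only afterwards lets $\varepsilon\to0$.
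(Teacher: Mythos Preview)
Your route is correct and genuinely different from the paper's. The paper writes each term $P_{d,n}(x)P_{d,n}(y)$ via Heine's determinantal formulas (\ref{integ-Dm})--(\ref{integ-Dx}), expands the numerator into monomial pieces, and then applies the ratio test to each piece through the moment--ratio Lemma~\ref{radius1}; the unbounded--support case is handled separately by the tail estimate of Lemma~\ref{infinite-support}. You bypass determinants altogether: after the Cauchy--Schwarz reduction to the diagonal, you use the variational identity $Q_{d,q-1}(x,x)^{-1}=\min\{\int|f|^2 t^{d}\,d\mu:\ f\in\mathcal P_{q-1},\ f(x)=1\}$ together with a finite--dimensional norm equivalence on $\mathcal P_{q-1}$. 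This is shorter, more elementary, and treats the bounded and unbounded cases in one stroke.

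The limitation you flag is genuine, and the paper's argument shares it. The proof of Lemma~\ref{radius1} asserts that the pushforward of $g\,d\mu^{\otimes n}$ (with $g$ the Vandermonde square) under $(x_1,\dots,x_n)\mapsto x_1\cdots x_n$ has support reaching $(R_s^{2})^{n}$; for that one needs $n$ \emph{distinct} points of $\operatorname{supp}\mu$ arbitrarily close to $R_s^{2}$, i.e.\ that $R_s^{2}$ be a limit point of $\operatorname{supp}\mu$ --- exactly your hypothesis. Without it the statement itself breaks down: for $\mu=\delta_1+\delta_2+\delta_3$ and $q=3$ one has $R_s^{2}=3$, yet plugging the test polynomial $f(t)=(t-2)(t-3)/6$ into your extremal formula gives $Q_{d,2}(0,0)\ge 9$ for every $d$, so $\sum_{d}\lambda^{d}Q_{d,2}(0,0)$ diverges already at $\lambda=1$. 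Hence your proof is as complete as the paper's, and is more transparent about where the accumulation hypothesis at $R_s^{2}$ is actually used.
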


 Next, let  $\mu \in \cM(s)$ and  denote by $\nu $ denote the  image measure on $\C$  of $\mu \otimes \sigma $ under the map $(t, \xi) \mapsto \sqrt{t}\xi$ from $[0, +\infty[\times \T$ onto $\C$,  where $\sigma$  is the rotation invariant probability measure on the unit circle $\T$ in $\C$. Then $\nu$ is rotation invariant. Conversely, it is known \cite{BT} that any  rotation invariant  Borel probabiltiy  $\nu$ on $\C$ is of this form. 
    Since $\mu$ is supported in the interval $[0, +R_{s}]$, it follows that the support of $\nu$ is contained in closure $\cOmts$  of the open disc
 $\Omts.$

  We consider the Hilbert space $L^2(\nu)$ of square integrable
complex-valued functions  in $\cOmts$ with respect to the measure
$\nu$.
We denote by ${\mathcal A}_{\nu, q}^{2}$   the 
 space  of those $q$-analytic functions on $\Oms$ which are square integrable with respect to $\nu.$
The natural inner product inherited from that of $L^2(\nu)$ turns ${\mathcal A}^{2}_{\nu, q}$ into a pre-Hilbert space.
  We are now prepared to state our second main result.
\newtheorem*{thB}{\bf Theorem B}  
\begin{thB}{\it   The space  ${\mathcal A}_{\nu,q}^2$ is a Hilbert space which coincides with the closure  of the $q$-analytic polynomials in  $L^2(\nu)$. Moreover, 
  for each  set compact $K \subset  \Oms$ we have that
 $$ {\sup}_{z \in K} {|f(z)|}
\leq C \parallel f \parallel _{L^2(\nu)}$$
 for all    $q$-analytic  polynomials $f \in L^2(\nu) $, where
 $$C=C(K) := {\sup}_{z\in K} \sqrt{F_{q, s} (|z|^{2},  |z|^{2}, |z|^{2} )}.$$
  Furthermore, 
the reproducing kernel  of   ${\mathcal A}_{\nu, q}^2$    is given by
$$R_{\nu, q}(z, w) =  F_{q, s} (z\bar w, |z|^{2}, |w|^{2}),  \ \ z, w \in \Oms.$$ }
\end{thB}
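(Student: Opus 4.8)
The plan is to exploit the rotation invariance of $\nu$ to decompose $L^2(\nu)$ into ``Fourier modes'' and reduce everything to the one-variable reproducing kernels $Q_{d,n}$, then use Theorem A to assemble the answer. First I would use the Almansi-type representation \eqref{almansi}: any $q$-analytic polynomial $f$ can be written $f(z)=\sum_{j=0}^{q-1}\bar z^{j}\phi_j(z)$ with $\phi_j$ holomorphic polynomials, so the monomials $\bar z^{j}z^{k}$ with $0\le j\le q-1$, $k\ge 0$ span the $q$-analytic polynomials. Passing to polar-type coordinates via the map $(t,\xi)\mapsto\sqrt t\,\xi$ that defines $\nu$, one computes $\langle \bar z^{j}z^{k},\bar z^{j'}z^{k'}\rangle_{L^2(\nu)}=\bigl(\int_{\T}\xi^{k-j}\bar\xi^{\,k'-j'}d\sigma\bigr)\bigl(\int_0^{\infty}t^{(k+j+k'+j')/2}d\mu(t)\bigr)$, and the circle integral vanishes unless $k-j=k'-j'$. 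Hence the $q$-analytic polynomials split as an orthogonal direct sum over the ``frequency'' $m=k-j\in\Z$, and within the block of fixed $m$ the relevant radial variable is $x=t$ and the pairing is exactly an inner product of the shape $\int_0^{\infty}P(x)\overline{P'(x)}\,x^{d}d\mu(x)$ with $d=2\min(j,j')$-type shifts; this is where $Q_{d,n}$ enters. Matching the two sums in \eqref{Fsq} to the cases $m\ge 0$ (governed by $Q_{d,q-1}$, coming from $z$-powers, since $j$ ranges over all of $\{0,\dots,q-1\}$) and $m<0$ (the finite sum with $\bar\lambda^{d}$ and truncated degree $q-1-d$, since now $k<j$ forces $j\ge d$) is the bookkeeping heart of the computation.

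Next I would establish the pointwise estimate, which simultaneously gives that $\mathcal A^2_{\nu,q}$ is complete and identifies its closure. For a fixed $z\in\Oms$ and a $q$-analytic polynomial $f$, I would expand $f$ in the orthonormal system adapted to the block decomposition above and apply Cauchy--Schwarz: $|f(z)|^2\le \|f\|_{L^2(\nu)}^2\cdot\sum(\text{squared norms of the reproducing-kernel ``sections'' evaluated at }z)$. The point is that this last sum is precisely $F_{q,s}(|z|^2,|z|^2,|z|^2)$ — one uses that $Q_{d,n}(x,x)$ is the squared norm of the evaluation functional at $x$ in $\mathcal P_n(x^{d}d\mu)$, and the substitution $\lambda=z\bar z=|z|^2$, $x=y=|z|^2$ collapses \eqref{Fsq} to exactly the series of these squared norms. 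Theorem A guarantees this series converges (locally uniformly, since $|z|^2<R_s^2$ on compacta of $\Oms$), so $C(K)$ is finite on each compact $K\subset\Oms$. The estimate shows that on the completion of the $q$-analytic polynomials every element is still a genuine function, $q$-analytic by normal-limit arguments, and square-integrable; conversely a standard argument shows any $f\in\mathcal A^2_{\nu,q}$ orthogonal to all $q$-analytic polynomials must vanish (expand in Fourier modes and use that $\mu$ has infinitely many mass points in its support since $s\in\etSq$), so $\mathcal A^2_{\nu,q}$ is exactly that closure and is a Hilbert space.

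Finally, the reproducing kernel: having the orthonormal block decomposition and the locally uniform convergence from Theorem A, I would write $R_{\nu,q}(z,w)=\sum_{\text{modes}}(\text{block kernels})$, each block kernel being of the form $z^{a}\bar z^{b}\,Q_{d,n}(|z|^2,|w|^2)\,\overline{w^{a}\bar w^{b}}$, and then check that the total reassembles to $F_{q,s}(z\bar w,|z|^2,|w|^2)$ by the same index bookkeeping as before, with the holomorphic/antiholomorphic monomials supplying the factors $\lambda^{d}=(z\bar w)^{d}$ and $\bar\lambda^{d}=(\bar z w)^{d}$. The reproducing property itself is then immediate from orthonormality once convergence is in hand. \emph{The main obstacle} I anticipate is not any single hard inequality but the combinatorial identification: carefully tracking how the pair $(j,k)$ with $0\le j\le q-1$ in the Almansi expansion, together with the frequency $m=k-j$, maps onto the parameters $(d,n)$ of $Q_{d,n}$ — in particular justifying the asymmetry between the infinite sum (degree $q-1$, all frequencies $m\ge 0$) and the finite sum (degrees $q-1-d$, frequencies $-d$ for $1\le d\le q-1$) in \eqref{Fsq}, and making sure the radial moment weights $x^{d}d\mu$ line up with $\langle\cdot,\cdot\rangle$ on $\mathcal P_n(\mu)$. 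Interchanging the sum over frequencies with integration, and upgrading convergence from pointwise (Theorem A) to the locally uniform convergence needed to conclude $R_{\nu,q}$ is genuinely the reproducing kernel on all of $\Oms\times\Oms$, is the remaining technical point.
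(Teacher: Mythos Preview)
Your proposal is correct and follows essentially the same route as the paper: the authors make your ``Fourier-mode'' decomposition explicit by introducing the orthonormal system $H_{m,n}(z)=r^{|m-n|}\xi^{m}\bar\xi^{\,n}P_{|m-n|,\,m\wedge n}(r^{2})$, prove its orthogonality directly, and then carry out exactly the index bookkeeping you anticipate (splitting into the cases $m\ge n$ and $m<n$) to identify the kernel as $F_{q,s}(z\bar w,|z|^{2},|w|^{2})$, after which Theorem~B reduces to showing via the Almansi expansion that every $q$-analytic $L^{2}(\nu)$ function lies in the closed span of the $H_{m,n}$. One small correction: $s\in\etSq$ only guarantees that the support of $\mu$ contains at least $q$ points, not infinitely many, so your parenthetical justification for the converse inclusion should instead run through the Almansi/power-series expansion, as the paper does.
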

\newtheorem*{RemC}{\bf Remark C }  
\begin{RemC}{\rm  When the measure $\mu$ has a finite support and the number points of the support of $\mu$ is $q$, then
Theorem B gives à Cauchy type formula for polyanalytic functions. Such an example can be obtained using the Kroutchouk measure and related orthogonal polynomials.}
\end{RemC}
A first  application of our results provides the weighted polyanalytic Bergman kernel of the unit disc $\{z\in \C : |z| <1\}$. More precisely,  for $\alpha >-1,$ we consider the  space ${\mathcal A}_{\alpha, q}^2$ of all square integrable of  $q$-analytic functions  with respect to the  measure $ d\nu_\alpha(z) : (1-|z|^2)^\alpha \frac{dA(z)}{\pi}, $ where $dA(z)$ is the Lebesgue measure on $\D$. We will prove the following
\newtheorem*{thD}{\bf Theorem D}  
\begin{thD}{\it   The space ${\mathcal A}_{\alpha, q}^2$  is a Hilbert space which coincides with the closure  of the $q$-analytic polynomials in  $L^2(\nu_\alpha )$ and its  reproducing kernel    is given by
\begin{eqnarray*}  K_{\alpha, q}(z, w)  & =  
 q  \binom{\alpha + q-1}{\alpha}  \frac{(1- \bar z w)^{q-1}}{(1- z\bar w)^{\alpha +q+1}}  \sum_{j=0}^{q-1} (-1)^{j} \binom{q-1}{j} \binom{\alpha +q+j}{\alpha + q-1}    \frac{ |z-w|^{2j} }{ |1 - z\bar w|^{2j}  }.
\end{eqnarray*} 
for all $z, w \in \D.$ }
\end{thD}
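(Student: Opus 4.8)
The plan is to specialize Theorem B to the measure $d\nu_\alpha(z) = (1-|z|^2)^\alpha \frac{dA(z)}{\pi}$. First I would identify the associated Stieltjes moment data: writing $z = \sqrt{t}\,\xi$ with $\xi \in \T$, the radial part is $d\mu_\alpha(t) = (\alpha+1)(1-t)^\alpha\,dt$ on $[0,1]$, so $R_s = 1$, $\Oms = \D$, and the moments are $s_d = \int_0^1 t^d (\alpha+1)(1-t)^\alpha\,dt = \frac{\Gamma(\alpha+2)\Gamma(d+1)}{\Gamma(\alpha+d+2)}$, i.e.\ the reciprocal of the usual weighted Bergman norming constants. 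Theorem B then immediately gives that ${\mathcal A}_{\alpha,q}^2$ is a Hilbert space equal to the closure of the $q$-analytic polynomials, and reduces the kernel computation to evaluating $F_{q,s}(z\bar w, |z|^2, |w|^2)$ from \eqref{Fsq}. So the entire content is the computation of the finite-dimensional reproducing kernels $Q_{d,n}(x,y)$ for the inner product $\langle f,g\rangle = \int_0^1 f(x)g(x)\, x^d (\alpha+1)(1-x)^\alpha\,dx$ on polynomials of degree $\le n$, for $n = q-1$ and $n = q-1-d$, and then resumming the two series in $\lambda = z\bar w$.

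The key step is thus to produce $Q_{d,n}(x,y)$ explicitly. The measure $x^d(1-x)^\alpha\,dx$ on $[0,1]$ is (up to normalization) a Jacobi-type weight, so the orthogonal polynomials are Jacobi polynomials $P_k^{(\alpha,d)}$ suitably shifted to $[0,1]$, and $Q_{d,n}(x,y) = \sum_{k=0}^n \frac{p_k(x)p_k(y)}{\|p_k\|^2}$ is their Christoffel–Darboux partial sum. Rather than manipulating Jacobi polynomials directly, I expect it is cleaner to compute the generating function $\sum_{d\ge 0}\lambda^d Q_{d,q-1}(x,y)$ by recognizing it, via Theorem B applied in reverse, as the reproducing kernel of the \emph{known} $q=1$ case bootstrapped in $q$; concretely, one can use the Almansi-type expansion \eqref{almansi} to write a $q$-analytic polynomial as $f(z) = \sum_{j=0}^{q-1}\bar z^j \phi_j(z)$ and compute $\|f\|_{L^2(\nu_\alpha)}^2$ by expanding each $\phi_j$ in the monomial basis, using that $\int_\D z^m\bar z^{m'}(1-|z|^2)^\alpha\frac{dA}{\pi} = \delta_{mm'}\frac{m!\,\Gamma(\alpha+2)}{\Gamma(\alpha+m+2)}$. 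This yields an explicit Gram matrix in the coefficients of the $\phi_j$, whose inverse, assembled against the basis $\{\bar z^j z^m\}$, produces the kernel; the hypergeometric sum over $j$ with the binomial coefficients $\binom{q-1}{j}\binom{\alpha+q+j}{\alpha+q-1}$ and the factor $\frac{|z-w|^{2j}}{|1-z\bar w|^{2j}}$ should emerge from this inversion.

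Concretely I would carry out the steps in this order: (i) reduce to computing $F_{q,s}$ via Theorem B; (ii) record the moments $s_d$ and hence the monomial norms above; (iii) compute $Q_{d,q-1}(x,y)$ and $Q_{d,q-1-d}(x,y)$ either as Christoffel–Darboux sums of shifted Jacobi polynomials or — preferably — by inverting the finite Gram matrices that arise from the Almansi decomposition; (iv) substitute into \eqref{Fsq} with $\lambda = z\bar w$, $x = |z|^2$, $y = |w|^2$, and sum the first (infinite) series using the negative-binomial expansion $(1-t)^{-\alpha-q-1} = \sum_d \binom{\alpha+q+d}{\alpha+q}t^d$ together with the second (finite) correction series; (v) collect terms and verify the closed form, checking the sanity case $q=1$ against the classical weighted Bergman kernel $(1-z\bar w)^{-\alpha-2}$ and the case $\alpha=0$ against Koshelev's formula \cite{Ko}. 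The main obstacle is step (iii)–(iv): turning the CD kernels (or Gram inverses) into a form that resums cleanly in $d$, since one must track the interplay between the degree truncation at $q-1$ (resp.\ $q-1-d$) and the $d$-dependence of the weight $x^d$; getting the conjugate factor $(1-\bar z w)^{q-1}$ and the modulus factors $|z-w|^{2j}/|1-z\bar w|^{2j}$ to come out requires careful bookkeeping of which powers of $\bar\lambda = \bar z w$ versus $\lambda = z\bar w$ appear, and this combinatorial resummation is where essentially all the work lies.
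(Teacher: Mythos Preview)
Your reduction via Theorem~B and the identification of the radial measure with a Jacobi weight are correct and match the paper's setup. But at your steps (iii)--(iv) the paper takes a completely different route, and your own route --- computing all $Q_{d,n}$ and resumming in $d$ --- is exactly the brute-force calculation the paper's method is designed to bypass; you correctly flag the resummation as ``where essentially all the work lies'' but supply no actual mechanism for it (``should emerge from this inversion'' is a hope, not an argument).

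The paper's device is a transformation rule (Lemma~\ref{tr-rule}): for any automorphism $\varphi$ of $\D$,
\[
K_{q,\alpha}(z,\xi) = \frac{\bigl(\varphi'(z)\,\overline{\varphi'(\xi)}\bigr)^{(\alpha+q+1)/2}}{\bigl(\overline{\varphi'(z)}\,\varphi'(\xi)\bigr)^{(q-1)/2}}\, K_{q,\alpha}\bigl(\varphi(z),\varphi(\xi)\bigr),
\]
proved directly from the reproducing property together with the $\mathrm{Aut}(\D)$-invariance of $(1-|z|^2)^{-2}dA$. Taking $\varphi = \varphi_w(z) = (z-w)/(1-\bar w z)$ reduces the whole kernel to $K_{q,\alpha}(\cdot,0) = F_{q,s}(0,|z|^2,0) = Q_{0,q-1}(|z|^2,0)$: a \emph{single} Christoffel--Darboux kernel, at the special second argument $y=0$, for which a known Jacobi identity (formula~(3.114) in~\cite{STW}) gives the closed polynomial form in one line. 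The factors $(1-\bar z w)^{q-1}/(1-z\bar w)^{\alpha+q+1}$ and $|z-w|^{2j}/|1-z\bar w|^{2j}$ then drop out automatically from $\varphi_w'$ and $|\varphi_w(z)|^{2j}$, with no series manipulation at all. This M\"obius covariance is the idea missing from your plan; without it, getting the conjugate factor $(1-\bar z w)^{q-1}$ and the pseudo-hyperbolic moduli to emerge from a direct sum over $d$ would require a substantial Jacobi-polynomial generating-function identity that you have not stated, let alone proved.
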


We point out that when $\alpha =0,$ this result was established by Koshelev \cite{Ko} by 
different method  limited to the case  $\alpha =0,$ but does not work for  $\alpha \not =0.$ 

A third application of our results provides the weighted polyanalytic Bergman kernel for the weighted Fock space. Namely, let $\alpha >0,$ and denote by
${\mathcal F}_{\alpha, q}(\C)$  the  space ${\mathcal A}_{\alpha, q}^2$ of all square integrable of  $q$-analytic functions  with respect to the  measure $ d\nu_\alpha(z) : |z|^{2\alpha} e^{- |z|^2} \frac{dA(z)}{\pi}, $ where $dA(z)$ is the Lebesgue measure on $\C$ We will establish the following
\newtheorem*{thE}{\bf Theorem E}  
\begin{thE}{\it   The space ${\mathcal F}_{\alpha, q}$  is Hilbert space which coincides with the closure  of the $q$-analytic polynomials in  $L^2(\nu_\alpha )$ and its  reproducing kernel    is given by
\begin{eqnarray*}  K_{\alpha, q}(z, w)  & =  e^{z\bar w}L_{q-1}^{\alpha +1}\left(|z- w|^2\right), \ \  \ \text{for all} \  z, w \in \C
\end{eqnarray*} 
 where  $L_{q-1}^{\alpha}$ is classical weighted Laguerre polynomials  of degree $q-1$ and weight $\alpha
$.  }
\end{thE}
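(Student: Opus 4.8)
\emph{Proof plan.}
The plan is to deduce Theorem~E from Theorem~B by identifying the data $(\mu,s)$ attached to $\nu_\alpha$ and then evaluating the universal kernel $F_{q,s}$ of \eqref{Fsq} in closed form. First I would check that $\nu_\alpha$ is rotation invariant and that, writing $z=\sqrt t\,\xi$ with $(t,\xi)\in[0,+\infty[\times\T$ and passing to polar coordinates, $\nu_\alpha$ is the image of $\mu\otimes\sigma$ where $d\mu(t)$ is a constant multiple of $t^{\alpha}e^{-t}\,dt$ on $[0,+\infty[$ (the constant being the one normalizing $\nu_\alpha$). Its moment sequence $s=(s_d)$ is then proportional to $\big(\Gamma(d+\alpha+1)\big)_d$; it is non-vanishing and its representing measure has infinitely many positive points in its support, so $s\in\etSq$ for every $q$, while Stirling's formula gives $s_d^{1/2d}\to+\infty$, hence $R_s=+\infty$ and $\Oms=\C$. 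Theorem~B (together with Theorem~A, which guarantees the relevant convergence since $R_s=+\infty$) then applies verbatim: $\mathcal F_{\alpha,q}$ is a Hilbert space equal to the $L^2(\nu_\alpha)$-closure of the $q$-analytic polynomials, and $K_{\alpha,q}(z,w)=F_{q,s}(z\bar w,|z|^2,|w|^2)$. Everything is thereby reduced to a computation with $F_{q,s}$.

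Second, I would identify the one-variable kernels $Q_{d,n}$. The space $\mathcal P_n(\mu)$ is the span of $1,x,\dots ,x^{n}$ inside $L^{2}\!\big(x^{d}\,d\mu(x)\big)=L^{2}\!\big(x^{d+\alpha}e^{-x}\,dx\big)$ (up to the inessential normalizing constant), whose orthogonal polynomials are precisely the generalized Laguerre polynomials $L_k^{\,d+\alpha}$ with the classical squared norms $\Gamma(k+d+\alpha+1)/k!$. Hence $Q_{d,n}(x,y)$ equals, up to that constant,
\[
\sum_{k=0}^{n}\frac{k!}{\Gamma(k+d+\alpha+1)}\;L_k^{\,d+\alpha}(x)\,L_k^{\,d+\alpha}(y),
\]
and substituting into \eqref{Fsq} expresses $K_{\alpha,q}(z,w)$ as an explicit double series in $d$ and $k$.

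Third --- and this is the crux --- I would resum this series. The essential simplification is that on the locus where the kernel is evaluated one has the constraint $xy=|\lambda|^2$ (with $\lambda=z\bar w$, $x=|z|^2$, $y=|w|^2$), so every surplus factor $xy$ may be traded for $\lambda\bar\lambda$; moreover $x+y-\lambda-\bar\lambda=|z-w|^2=:u$. Expanding each $L_k^{\,d+\alpha}$ by its explicit hypergeometric formula and summing the resulting geometric-type series over $d\ge 0$ produces the factor $e^{\lambda}$ together with terms in which the leftover powers of $x$ and $y$ recombine, via $xy=\lambda\bar\lambda$, into powers of $\bar\lambda$; the finite anti-holomorphic tail $\sum_{d=1}^{q-1}\bar\lambda^{d}Q_{d,q-1-d}$ in \eqref{Fsq} then exactly cancels the spurious $\bar\lambda$-contributions, leaving $e^{\lambda}$ times a polynomial of degree $q-1$ in $u$. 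Finally I would recognize this polynomial as $L_{q-1}^{\alpha+1}(u)$ by means of the contraction identity $\sum_{k=0}^{q-1}L_k^{\beta}=L_{q-1}^{\beta+1}$ and the Laguerre convolution $\sum_{i+j=m}L_i^{a}(s)L_j^{b}(t)=L_m^{a+b+1}(s+t)$, checking the bookkeeping first in the cases $q=1$ and $q=2$. The main obstacle is precisely this resummation: controlling the interaction of the infinite holomorphic sum over $d$, the finite anti-holomorphic correction, and the truncation at $k\le q-1$, and verifying that everything telescopes into a single Laguerre polynomial.
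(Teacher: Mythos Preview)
Your reduction to Theorem~B and the identification of the radial measure $d\mu(t)=c\,t^{\alpha}e^{-t}\,dt$, the moment sequence, $R_s=+\infty$, and the Laguerre orthonormal system underlying $Q_{d,n}$ all match the paper's argument exactly. The divergence is entirely in the resummation step. The paper does not expand each $L_k^{\,d+\alpha}$ monomially and then chase cancellations between the holomorphic series and the anti-holomorphic tail as you propose; instead it invokes three classical Laguerre identities in succession: Bailey's bilinear formula
\[
L_n^{\beta}(x)\,L_n^{\beta}(y)=\frac{\Gamma(\beta+n+1)}{n!}\sum_{l=0}^{n}\frac{(xy)^{n-l}}{(n-l)!\,\Gamma(\beta+n+1-l)}\,L_l^{\,\beta+2n-2l}(x+y),
\]
the finite addition formula $L_n^{\beta}(x-y)=\sum_{r=0}^{n}\frac{y^r}{r!}L_{n-r}^{\,\beta+r}(x)$, and the generating-type identity $L_n^{\beta}(x-y)=e^{-y}\sum_{r\ge 0}\frac{y^r}{r!}L_n^{\,\beta+r}(x)$. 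The last of these, applied with $y=\lambda$, produces the factor $e^{\lambda}$ and collapses the infinite $d$-sum in one stroke; the anti-holomorphic tail is not handled by a separate cancellation argument but is absorbed at the outset by letting the index $d$ run down to $-n$. After this each $n$-summand is already $e^{\lambda}L_n^{\alpha}\!\big(x+y-\lambda-xy/\lambda\big)$, and the contraction $\sum_{n=0}^{q-1}L_n^{\alpha}=L_{q-1}^{\alpha+1}$ finishes. Your route via brute-force expansion and the convolution $\sum_{i+j=m}L_i^{a}(s)L_j^{b}(t)=L_m^{a+b+1}(s+t)$ should also reach the goal, but note that this convolution identity does not directly match the structure of the sum you face (the products are $L_kL_k$ with equal indices, not $L_iL_j$ with $i+j$ fixed), so expect more bookkeeping than the paper's Bailey-based computation requires.
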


We point out that when $\alpha =0,$ this result was established by Haimi and Hedenmalm \cite{HH1} using a
different method which  does not go trough for   $\alpha \not =0.$

\section{Preliminary results} We collect a few preliminary results from the refrences \cite{DX} or \cite{I}. 
Let $s=(s_n)$  be a Stieltjes moment sequence  and   $\mu \in \cS$ representing measure  of $s$. We assume in this section  that the support of  $\mu$  has $N(\mu) \geq q$ elements.  For each   non-negative integers $n \leq N(\mu) -1$  set

 $$D_{\mu, n} := 
 \left |
\begin{matrix}
 s_{0} & s_{1} & ... & s_{n} \\
 s_{1}  & s_{2} & ... & s_{n +1} \\
\vdots & \vdots    & \cdots & \vdots \\
s_{n} & s_{n+1} &  \cdots  & s_{2n} \\ 
\end{matrix}
\right|
 %= { \prod_{0\leq j < i \leq n} (x_{i} - x_{j})} 
$$
and for $x\in \C, $ let 
$$D_{\mu, n}(x) := 
 \left |
\begin{matrix}
 s_{0} & s_{1} & ... & s_{n} \\
 s_{1}  & s_{2} & ... & s_{n +1} \\
\vdots & \vdots    & \cdots & \vdots \\
s_{n-1} & s_{ n} &  \cdots  & s_{2n-1} \\ 
1 & x &  \cdots  & x^n\\ 
\end{matrix}
\right|
 %= { \prod_{0\leq j < i \leq n} (x_{i} - x_{j})} 
$$
It is well-known that the sequence $(P_{\mu, n} )_{n=0}^{N(\mu) - 1}$ of  orthogonal  polynomials with respect to the measure $d\mu(x)$ is given by 
\begin{eqnarray}\label{orthog-poly} P_{\mu,  n}(x)= \frac{D_{\mu, n}(x)}{\sqrt{D_{\mu, n-1} D_{\mu, n}}}.
\end{eqnarray}
so that the reproducing kernel $Q_{\mu, n} $ is given by 
\begin{eqnarray}\label{rep-kern-poly}Q_{\mu, n} (x,y) = \sum_{j=0}^n \frac{D_{\mu, j}(x)D_{\mu, j}(y)}{D_{\mu, j-1} D_{\mu, j}}.
\end{eqnarray}

We recall the following classical theorem of Heine a proof of which can be found in \cite{DX}

\begin{lemma}\label{Heine} The  determinants  $D_{ n}$ and $D_{ n}(x)$ have the integral representations
\begin{eqnarray}\label{integ-Dm}  D_{n} & = &\frac{1}{(n+1)!} \int_{[0,+\infty[^{n+1}}   \prod_{1\leq j<k\leq n+1}(x_j-x_k)^2   d\mu(x_1) \cdots d\mu(x_{n+1}) \\
\label{integ-Dx} D_{n}(x) & = & \frac{1}{n!}  \int_{[0,+\infty[^n} \prod_{i=1}^n (x-x_i)  \prod_{1\leq j<k\leq n}(x_j-x_k)^2   d\mu(x_1) \cdots d\mu(x_n)
\end{eqnarray} 
 \end{lemma}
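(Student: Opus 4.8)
The plan is to evaluate the two determinants by expanding them as sums over permutations and then recognizing the resulting symmetric sums as integrals of Vandermonde-type products. For \eqref{integ-Dm}, I would start from
\[
D_{n} = \det\bigl(s_{i+j}\bigr)_{0\le i,j\le n}
= \det\left(\int_0^{+\infty} x^{i+j}\,d\mu(x)\right)_{0\le i,j\le n}.
\]
Writing $s_{i+j} = \int x_{j}^{i}\,x_{j}^{j}\,d\mu(x_{j})$ is not quite the right bookkeeping; instead I would use the standard device: assign an independent integration variable $x_{k}$ to the $k$-th column, so that the $(i,k)$ entry becomes $\int x_{k}^{i+k}\,d\mu(x_{k})$. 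Pulling all $n+1$ integrals outside the determinant (legitimate since everything is absolutely integrable, the measure having all moments) gives
\[
D_{n} = \int_{[0,+\infty[^{n+1}} \det\bigl(x_{k}^{\,i}\bigr)_{0\le i,k\le n}\ \prod_{k=0}^{n} x_{k}^{\,k}\ d\mu(x_{0})\cdots d\mu(x_{n}).
\]
The determinant $\det(x_{k}^{i})$ is the Vandermonde determinant $\prod_{0\le j<k\le n}(x_{k}-x_{j})$, so the integrand is $\prod_{k} x_{k}^{k}$ times a Vandermonde. To symmetrize and remove the asymmetric factor $\prod x_{k}^{k}$, I would average over all $(n+1)!$ relabelings of the variables: the Vandermonde picks up a sign under a permutation, and averaging $\operatorname{sgn}(\pi)\prod_{k} x_{\pi(k)}^{k}$ reconstructs a second Vandermonde factor, yielding $\frac{1}{(n+1)!}\int \prod_{j<k}(x_{j}-x_{k})^{2}\,d\mu^{\otimes(n+1)}$, which is \eqref{integ-Dm}.

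For \eqref{integ-Dx}, the determinant $D_{n}(x)$ has the same moment matrix in its first $n$ rows but the last row is $(1, x, \dots, x^{n})$. Expanding along the last row and applying the argument above to each of the $(n+1)$ cofactors — each being an $n\times n$ moment determinant with one column index skipped — I expect the cofactor expansion to reassemble into $\frac{1}{n!}\int_{[0,+\infty[^{n}} \prod_{i=1}^{n}(x-x_{i})\prod_{j<k}(x_{j}-x_{k})^{2}\,d\mu(x_{1})\cdots d\mu(x_{n})$. The cleanest route is actually the reverse: start from the claimed integral, expand $\det(1, x_{k}, \dots, x_{k}^{n-1}, \text{then the row } (1,x,\dots,x^{n}))$ as an $(n+1)\times(n+1)$ Vandermonde in the variables $x_{1},\dots,x_{n},x$, use the same one-variable-per-column trick in reverse to fold the $x_{i}$-integrations back into moments, and observe that the $x$-dependent row is untouched. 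This directly produces $D_{n}(x)$.

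The main obstacle is purely bookkeeping: keeping the column/variable correspondence and the permutation signs straight through the symmetrization, and justifying the interchange of the finite determinant (a finite sum) with the integrals — the latter is immediate since $\mu$ has moments of all orders and each determinant is a finite linear combination of products of moments. Since the statement says a proof can be found in \cite{DX}, I would keep this brief and cite it, merely indicating the symmetrization identity
\[
\frac{1}{(n+1)!}\sum_{\pi \in S_{n+1}} \operatorname{sgn}(\pi)\prod_{k=0}^{n} x_{\pi(k)}^{\,k} \;=\; \prod_{0\le j<k\le n}(x_{k}-x_{j})\ \Big/\ \text{(normalization)}
\]
as the key computational input, and noting that no positivity or convexity beyond absolute integrability is used.
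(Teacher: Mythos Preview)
Your argument is the standard Heine computation and is correct: the column-by-column (or row-by-row) introduction of independent integration variables, followed by symmetrization over $S_{n+1}$ (resp.\ $S_n$) to convert the asymmetric monomial factor into a second Vandermonde, is exactly the classical derivation. The one place to tidy is the displayed ``symmetrization identity'' at the end, which as written is garbled; the clean statement you actually use is
\[
\sum_{\pi\in S_{n+1}} \operatorname{sgn}(\pi)\prod_{k=0}^{n} x_{\pi(k)}^{\,k}
= \det\bigl(x_j^{\,k}\bigr)_{0\le j,k\le n}
= \prod_{0\le j<k\le n}(x_k-x_j),
\]
with no extra normalization.

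As for comparison with the paper: the paper does not supply its own proof of this lemma. It is stated as a classical result of Heine with a reference to \cite{DX}, where the proof given is essentially the symmetrization argument you outline. So your proposal aligns with the cited source rather than diverging from anything in the paper itself.
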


In what follows we shall  fix  the measure $ \mu,$ and for each $d \in \N,$ we consider determinants and orthogonal polynomials with respect to the measure $x^d d\mu(x).$ Then  we simply  set
\begin{eqnarray}
D_{x^d \mu(x) , n} := D_{d, n} \ \ \text{and} \ \ D_{x^d \mu(x) , n}(x) := D_{d, n}(x).
\end{eqnarray} 
The   sequence of  orthogonal  polynomials with respect to the measure $x^{d}d\mu(x)$ will be then denote by $(P_{d, m} )_{m=0}^{N(\mu) -1}$  and it is given by 
\begin{eqnarray}\label{orthog-poly} P_{d, n}(x)= \frac{D_{d, n}(x)}{\sqrt{D_{d, n-1} D_{d, n}}}.
\end{eqnarray}
so that the  corresponding reproducing kernel $Q_{d, n} $ is given by 
\begin{eqnarray}\label{rep-kern-poly}Q_{d, n} (x,y) = \sum_{j=0}^n \frac{D_{d, j}(x)D_{d, j}(y)}{D_{d, j-1} D_{d, j}}.
\end{eqnarray}

\begin{lemma}\label{infinite-support}  Suppose that the support of $\mu$ is unbounded. Then for any positive integer $n$ and  $x\in [0,+\infty[$, there exist $t_x > 0$  and a constant $C_x>0$ such that
\begin{eqnarray}  |P_{d, n}(x)| \mu\left([t_x, +\infty[\right) & \leq  \frac{C_x}{t^{d/2}} 
\end{eqnarray} 
for all $t\geq t_x.$
 \end{lemma}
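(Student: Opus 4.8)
The plan is to control $|P_{d,n}(x)|$ through the Christoffel function attached to the weight $x^{d}\,d\mu(x)$. Since $(P_{d,j})_{j=0}^{n}$ is an orthonormal basis of the $(n+1)$-dimensional space of polynomials of degree $\le n$ in $L^{2}(x^{d}d\mu)$, one has $|P_{d,n}(x)|^{2}\le\sum_{j=0}^{n}|P_{d,j}(x)|^{2}=Q_{d,n}(x,x)$, and the reproducing property of $Q_{d,n}$ gives the classical extremal identity
$$\frac{1}{Q_{d,n}(x,x)}=\min\left\{\int_{0}^{+\infty}|p(y)|^{2}\,y^{d}\,d\mu(y)\ :\ \deg p\le n,\ p(x)=1\right\}.$$
So it suffices to bound this minimum from below. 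If $p$ is admissible and $t>0$, then using $y^{d}\ge t^{d}$ on $[t,+\infty[$,
$$\int_{0}^{+\infty}|p(y)|^{2}\,y^{d}\,d\mu(y)\ \ge\ t^{d}\int_{[t,+\infty[}|p(y)|^{2}\,d\mu(y),$$
which isolates the $d$-dependence; everything now reduces to a lower bound, independent of $d$, for $\int_{[t,+\infty[}|p|^{2}\,d\mu$ over polynomials $p$ with $\deg p\le n$ and $p(x)=1$.

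This is where the hypothesis that $\mathrm{supp}\,\mu$ is unbounded enters: $[t,+\infty[$ meets $\mathrm{supp}\,\mu$ in infinitely many points for every $t$, so I would fix $t_{x}>2x$ and, for each $t\ge t_{x}$, choose $n+1$ pairwise disjoint Borel cells $E_{0},\dots,E_{n}\subseteq[t,+\infty[$ of positive $\mu$-measure, spaced out geometrically so that the resulting interpolation nodes are well separated from one another and from $x$. For an admissible $p$, averaging $|p|^{2}$ against $\mu$ on each cell produces a node $y_{i}\in E_{i}$ with $|p(y_{i})|^{2}\le\mu(E_{i})^{-1}\int_{[t,+\infty[}|p|^{2}\,d\mu$, and Lagrange interpolation through $y_{0},\dots,y_{n}$ yields
$$1=|p(x)|\le\sum_{i=0}^{n}|p(y_{i})|\prod_{j\ne i}\frac{|x-y_{j}|}{|y_{i}-y_{j}|}\ \le\ B_{x}\left(\int_{[t,+\infty[}|p|^{2}\,d\mu\right)^{1/2},$$
where $B_{x}$ is controlled by $\max_{i}\mu(E_{i})^{-1/2}$ together with the (bounded) Lagrange ratios. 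Arranging the cells so that $B_{x}\le C_{x}/\mu([t,+\infty[)$ for a constant $C_{x}$ independent of $t\ge t_{x}$ then gives $\int_{[t,+\infty[}|p|^{2}\,d\mu\ge\mu([t,+\infty[)^{2}/C_{x}^{2}$.

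Combining the three estimates, $Q_{d,n}(x,x)\le C_{x}^{2}\,t^{-d}\,\mu([t,+\infty[)^{-2}$, so $|P_{d,n}(x)|\,\mu([t,+\infty[)\le C_{x}\,t^{-d/2}$ for all $d$ and all $t\ge t_{x}$, as asserted. The main obstacle is the last step of the preceding paragraph: producing a constant $C_{x}$ that does not deteriorate as $t\to+\infty$. The ratios $|x-y_{j}|/|y_{i}-y_{j}|$ are harmless once the cells are spread geometrically --- this is precisely what forces $t_{x}>2x$ and genuinely uses unboundedness rather than mere infiniteness of the support --- but when $\mu$ is very irregular on $[t,+\infty[$, for instance strongly atomic, some care is needed in selecting the cells so that $\max_{i}\mu(E_{i})^{-1/2}$ stays comparable to $\mu([t,+\infty[)^{-1}$. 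One could alternatively try to estimate $P_{d,n}(x)=D_{d,n}(x)/\sqrt{D_{d,n-1}D_{d,n}}$ directly from the Heine integral formulas of Lemma~\ref{Heine}, but comparing those multidimensional integrals seems less transparent than the Christoffel-function route.
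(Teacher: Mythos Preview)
Your Christoffel-function approach is sound through the extremal identity and the inequality $\int|p|^{2}y^{d}\,d\mu\ge t^{d}\int_{[t,\infty[}|p|^{2}\,d\mu$; this cleanly isolates the factor $t^{d}$. The gap is exactly where you locate it, and it is a genuine one: the claim that one can pick cells $E_{0},\dots,E_{n}\subset[t,\infty[$ which are both geometrically separated (so that the Lagrange ratios stay bounded) \emph{and} satisfy $\min_{i}\mu(E_{i})\gtrsim\mu([t,\infty[)^{2}$, uniformly in $t\ge t_{x}$. For a general measure with unbounded support this fails. Take for instance $\mu=\sum_{k\ge1}2^{-k^{2}}\delta_{k}$: then $\mu([N,\infty[)\asymp2^{-N^{2}}$, while any cell placed near $2N$ (as geometric spacing forces) has mass at most $\asymp2^{-4N^{2}}\ll\mu([N,\infty[)^{2}$. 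Hence $B_{x}$ blows up and the inequality $B_{x}\le C_{x}/\mu([t,\infty[)$ cannot be arranged. The phrase ``some care is needed'' understates the obstruction; without extra hypotheses on the tail of $\mu$ this part of the argument does not close.

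The paper's proof bypasses the difficulty by not going through the full Christoffel function at all. It works with the single polynomial $P_{d,n}$ itself: since $D_{d,n}(\cdot)$ has degree $n\ge1$ with positive leading coefficient, one picks $t_{x}$ with $|D_{d,n}(x)|\le|D_{d,n}(t)|$ for every $t\ge t_{x}$, and then
\[
|P_{d,n}(x)|^{2}\int_{t_{x}}^{\infty}t^{d}\,d\mu(t)\ \le\ \int_{t_{x}}^{\infty}|P_{d,n}(t)|^{2}\,t^{d}\,d\mu(t)\ \le\ \int_{0}^{\infty}|P_{d,n}(t)|^{2}\,t^{d}\,d\mu(t)\ =\ 1
\]
(the paper bounds the right side by $(n+1)!$ via the Heine integral for $D_{d,n}$, but orthonormality already gives $1$). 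No interpolation, no cell selection, no sensitivity to how $\mu$ distributes its tail mass. This is precisely the route you dismissed in your final sentence as ``less transparent''; in fact it is the shorter one here, because it replaces the infimum over all admissible $p$ by the single test polynomial $P_{d,n}/P_{d,n}(x)$, whose $L^{2}(t^{d}d\mu)$-norm is known exactly.
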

\begin{proof} In view of Lemma \ref{Heine} by Cauchy-Schwarz inequality we see that
\begin{eqnarray*}\left | D_{d, n}(x)\right|^2   \leq D_{d, n-1}  \int_{[0,+\infty[^n} \prod_{i=1}^n (x-x_i)^2  \prod_{1\leq j<k\leq n}(x_j-x_k)^2   (x_1 \cdots x_n)^d d\mu(x_1) \cdots d\mu(x_n)
\end{eqnarray*} 
 Since the degree  $n$ of the polynomial $D_{d, n}(x)$ is positive, there is $t_x > 0$ such that 
$$|D_{d, n}(x) | \leq |D_{d, n}(t) |, \ \ \text{for all} \ t\geq t_x$$
and thus by Lemma \ref{Heine} we get
\begin{eqnarray*} \frac{\left | D_{d, n}(x)\right|^2}{D_{d, n-1}}  \int_{t_x}^{+\infty}  x_{n+1}^d d\mu(x_{n+1}) & \leq  & \int_{t_x}^{+\infty}  \left | D_{d, n}(x_{n+1})\right|^2 x_{n+1}^d d\mu(x_{n+1})  \\
& \leq  & (n+1)!   D_{d, n} 
\end{eqnarray*} 
Taking $C= \sqrt{(n+1)!}$  and using (\ref{orthog-poly}) completes the proof.
\end{proof}
\section{Orthogonal polynomials with respect to rotation invariant measures}
Throughout this section, fix  an element $s= (s_d) \in \etSq$ and  let $\mu\in \cM(s)$. 
 For each pair of  non-negative integers $(d, n),$ with $d$ arbitrary and $n\leq q-1$,  let $( P_{d, k}), k\in\{0, \cdots, n\}$  be a sequence of orthonormal polynomials of
   Hilbert  space ${\mathcal P}_n(x^d\mu)$  equipped the $L^{2}(x^d d\mu(x))$ inner product. 
For all integers  $m, n \in \N $, set 
\begin{eqnarray} 
 m \wedge n := \min(m, n).\end{eqnarray}
 and  for all $z = r\xi \in \C, r \geq 0, |\xi| =1,$ 
 \begin{eqnarray}
   H_{m, n }(z)  :  = r^{ |m - n| } \xi^{m} {\bar \xi}^{ n}P_{|m - n|, m \wedge n}(r^{2}).
    \end{eqnarray}

\begin{lemma}\label{orthog}
The family $(H_{m, n })$ forms  an  orthogonal  system   in $L^2(\nu).$ 
\end{lemma}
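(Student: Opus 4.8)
The plan is to exploit the polar decomposition $\nu = \mu \circ (t \mapsto \sqrt t)\, \otimes\, \sigma$ and compute $\langle H_{m,n}, H_{m',n'}\rangle_{L^2(\nu)}$ by first integrating over the circle $\T$ and then over $[0,+\infty[$ against $\mu$. Writing $z = r\xi$ with $r\ge 0$, $|\xi|=1$, one has
\[
\langle H_{m,n}, H_{m',n'}\rangle_{L^2(\nu)} = \int_0^{+\infty}\!\!\int_\T r^{|m-n|+|m'-n'|}\,\xi^{m}\bar\xi^{\,n}\,\bar\xi^{\,m'}\xi^{n'}\,P_{|m-n|,\,m\wedge n}(r^2)\,\overline{P_{|m'-n'|,\,m'\wedge n'}(r^2)}\,d\sigma(\xi)\,d\mu_{\sqrt\cdot}(r),
\]
where I abbreviate by $d\mu_{\sqrt\cdot}$ the pushforward of $\mu$ under $t\mapsto\sqrt t$. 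Since $\sigma$ is rotation invariant, $\int_\T \xi^{a}\bar\xi^{\,b}\,d\sigma(\xi) = \delta_{a,b}$; here the total exponent of $\xi$ is $(m+n') - (n+m')$, so the circle integral vanishes unless $m - n = m' - n'$.

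First I would dispose of the case $m-n \ne m'-n'$: the circle integral is $0$ and so is the whole inner product, giving orthogonality. Next, assuming $m-n = m'-n' =: \ell$, the circle integral equals $1$, and the remaining integral becomes, after the substitution $x = r^2$ (which transports $d\mu_{\sqrt\cdot}(r)$ to $d\mu(x)$ and produces the factor $r^{2|\ell|} = x^{|\ell|}$),
\[
\langle H_{m,n}, H_{m',n'}\rangle_{L^2(\nu)} = \int_0^{+\infty} P_{|\ell|,\,m\wedge n}(x)\,\overline{P_{|\ell|,\,m'\wedge n'}(x)}\,x^{|\ell|}\,d\mu(x).
\]
Because $m - n = m' - n' = \ell$, the pair $(m',n')$ is $(m+k, n+k)$ for some integer $k$ with $m+k,n+k\ge 0$, and then $m'\wedge n' = (m\wedge n) + k$; so $m\wedge n$ and $m'\wedge n'$ are generally different, but both polynomials carry the \emph{same} first index $|\ell|$, hence both belong to the orthonormal family $(P_{|\ell|,j})_j$ associated to the measure $x^{|\ell|}d\mu(x)$. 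By orthonormality of that family, the last integral equals $\delta_{m\wedge n,\, m'\wedge n'}$; combined with $\ell = \ell'$ this forces $m = m'$ and $n = n'$. Thus $\langle H_{m,n}, H_{m',n'}\rangle_{L^2(\nu)} = \delta_{(m,n),(m',n')}$, which is even a little stronger than orthogonality — the system is orthonormal.

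The only genuinely delicate point is bookkeeping: one must be careful that $|m-n|$, not $m-n$, is the index of the polynomial and the power of $r$, so the reduction to a single measure $x^{|\ell|}d\mu$ is legitimate for all sign patterns; and one should note that all indices $m\wedge n$ occurring stay in $\{0,\dots,q-1\}$ (since $m\wedge n \le q-1$ is not needed for mere orthogonality, only boundedness of the polynomial degrees matters, and these $P_{|\ell|,j}$ live in ${\mathcal P}_n(x^{|\ell|}\mu)$ which is well defined as $s\in\etSq$ guarantees the measure $x^{|\ell|}d\mu$ has infinitely many — or at least $q$ — points in its support). Everything else is the routine Fubini/orthonormality computation sketched above.
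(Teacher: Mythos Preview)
Your proof is correct and follows essentially the same approach as the paper: both separate the integral via the polar decomposition of $\nu$, use the orthogonality of characters $\xi\mapsto\xi^k$ on $\T$ to force $m-n=m'-n'$, and then invoke the orthonormality of $(P_{|\ell|,j})_j$ with respect to $x^{|\ell|}d\mu(x)$ to conclude $\delta_{m\wedge n,\,m'\wedge n'}$. The only cosmetic difference is that the paper performs the substitution $t\mapsto\sqrt t$ at the outset (writing $H_{m,n}(r^{1/2}\xi)$ against $d\mu(r)$) rather than introducing the pushforward $d\mu_{\sqrt\cdot}$ and substituting back, and it combines your two cases into a single line of Kronecker deltas.
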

\begin{proof}
Let  $(m, n), (m', n') \in \N^2.$  We first observe that for $m +  n' = m' +  n,$ we have

$$\aligned
\int_{0}^{+\infty} H_{m, n }(r^{1/2}) \overline{ H_{m', n' }(r^{1/2}) } d\mu(r)  
&  =     \int_{0}^{+\infty} r^{|m- n | } P_{|m - n|, m \wedge n}(r)
P_{|m - n|, m' \wedge n'}(r)
d\mu(r) \\
& =  \delta_{m\wedge n , m' \wedge n' } 
 \endaligned
 $$

By the change of variables formula, we see that
$$\aligned
\int_{\Oms} H_{m, n }(z) \overline{ H_{m', n' }(z) } d\nu(z)  &  = \int_{0}^{+\infty} \int_{\T} 
   H_{m, n }(r^{1/2}\xi) \overline{ H_{m', n' }(r^{1/2}\xi) }   d\sigma(\xi) d\mu(r) \\
&  =     \int_{0}^{+\infty} 
   H_{m, n }(r^{1/2}) \overline{ H_{m', n' }(r^{1/2}) } d\mu(r)  \int_{\T}  \xi^{m+ n'} 
 \overline{\xi^{m' +  n}} d\sigma(\xi) \\
& =  \delta_{m +  n', m' +  n} \delta_{m\wedge n , m' \wedge n' }   \\
& =  \delta_{(m,  n), (m',  n')}. 
 \endaligned
 $$

This completes the proof.
\end{proof}
\begin{lemma}\label{radius1} Let $n $ and $d $  be positive integers such that $n\leq q-1.$  Consider a polynomial $f$ in $n$-variables  such that 
$ f(x_1, \cdots, x_n) > 0$ for all pairwise distinct elements $x_1, \cdots, x_n$ of $[0, +\infty[,$
   and  set 
\begin{eqnarray} \gamma_{d, n}(f) := \int_{0}^{+\infty} \cdots \int_{0}^{+\infty}  f(x_1, \cdots, x_n)  (x_1 \cdots x_n)^d d\mu(x_1) \cdots d\mu(x_n)
\end{eqnarray} 
Then
 \begin{eqnarray} 
 \lim_{d \to +\infty}  \frac{ \gamma_{d +1, n}(f)}{ \gamma_{d, n}(f)} =  \left(\lim_{d \to +\infty}  \frac{ s_{d+1}}{ s_{d}}\right)^n
  \end{eqnarray} 
 \end{lemma}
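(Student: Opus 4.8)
The plan is to peel the two-index quotient down to a single-parameter moment quotient and then read off the limit. Put $\phi(x):=x_{1}\cdots x_{n}$ on $[0,+\infty[^{n}$. Since $f$ is a polynomial which is positive on the dense set of $n$-tuples with pairwise distinct entries, $f\ge 0$ on all of $[0,+\infty[^{n}$, so $\lambda:=f\cdot\mu^{\otimes n}$ is a finite non-negative Borel measure — finite because $f$ is a polynomial and $\mu$ has moments of all orders. It is not the zero measure: since $s\in\etSq$ and $n\le q-1$, $\operatorname{supp}\mu$ contains at least $q\ge n+1$ strictly positive points, and choosing $n$ pairwise distinct ones $p_{1}<\dots<p_{n}$ in $]0,+\infty[$, one encloses them in a product $I_{1}\times\dots\times I_{n}$ of open intervals small enough that $f>0$ on it (continuity of $f$ at a point with distinct coordinates); each $\mu(I_{i})>0$, so the box carries positive $\lambda$-mass. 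With this notation $\gamma_{d,n}(f)=\int\phi^{d}\,d\lambda$, and the quantity to understand becomes
\[
 \frac{\gamma_{d+1,n}(f)}{\gamma_{d,n}(f)}=\frac{\int\phi^{d+1}\,d\lambda}{\int\phi^{d}\,d\lambda}.
\]

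Next I would show these quotients converge and identify the limit. Cauchy--Schwarz in $L^{2}(\lambda)$ applied to $\phi^{(d-1)/2}$ and $\phi^{(d+1)/2}$ gives $\big(\int\phi^{d}\,d\lambda\big)^{2}\le\int\phi^{d-1}\,d\lambda\cdot\int\phi^{d+1}\,d\lambda$, so the quotients are non-decreasing in $d$ and converge to some $\ell\in ]0,+\infty]$. A routine comparison identifies $\ell$ with the $\lambda$-essential supremum $S$ of $\phi$ (the least $S\ge0$ with $\phi\le S$ $\lambda$-a.e.): the bound $\int\phi^{d+1}\,d\lambda\le S\int\phi^{d}\,d\lambda$ gives $\ell\le S$, while if $\ell<S'<S''<S$, iterating $\int\phi^{d}\,d\lambda\le(S')^{d}\lambda([0,+\infty[^{n})$ contradicts $\int\phi^{d}\,d\lambda\ge(S'')^{d}\lambda(\{\phi>S''\})$ — a set of positive $\lambda$-mass — since $(S''/S')^{d}\to+\infty$; the case $S=+\infty$ is treated the same way. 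On the other side of the asserted identity, $\lim_{d}s_{d+1}/s_{d}$ exists and equals $M:=\sup(\operatorname{supp}\mu)\in ]0,+\infty]$: Cauchy--Schwarz in $L^{2}(\mu)$ makes $s_{d+1}/s_{d}$ non-decreasing, obviously $s_{d+1}/s_{d}\le M$, and $s_{d+1}/s_{d}\ge M-\varepsilon$ for $d$ large because $\mu$ charges $[M-\varepsilon,M]$ whereas $\int_{[0,M-\varepsilon[}t^{d}\,d\mu\,/\,s_{d}\to0$. Hence the lemma reduces to the single assertion $S=M^{n}$.

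To finish I would compute $S$. The inequality $S\le M^{n}$ is immediate, because $\operatorname{supp}\lambda\subseteq(\operatorname{supp}\mu)^{n}\subseteq[0,M]^{n}$, where $\phi\le M^{n}$. For $S\ge M^{n}$ I must show $\lambda(\{\phi>M^{n}-\varepsilon\})>0$ for every $\varepsilon>0$ (and, when $M=+\infty$, $\lambda(\{\phi>R\})>0$ for every $R$). Now $\{\phi>M^{n}-\varepsilon\}$ is an open neighbourhood of the corner $(M,\dots,M)$, and the decisive point is to place inside it an $n$-tuple $p_{1}<\dots<p_{n}$ of pairwise distinct points of $\operatorname{supp}\mu$: when $M<\infty$, points lying in $\operatorname{supp}\mu\cap\,]M-\eta,M]$ with $\eta$ small; when $M=+\infty$, points whose product $p_{1}\cdots p_{n}$ is as large as desired, using that $\operatorname{supp}\mu$ is then unbounded. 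Distinctness of the $p_{i}$ makes $f$ positive on a small product box around $(p_{1},\dots,p_{n})$ contained in $\{\phi>M^{n}-\varepsilon\}$, each factor of which has positive $\mu$-mass; so that box has positive $\lambda$-mass, and $S\ge M^{n}-\varepsilon$. Letting $\varepsilon\downarrow0$ gives $S=M^{n}$, and therefore $\gamma_{d+1,n}(f)/\gamma_{d,n}(f)\to\ell=S=M^{n}=\big(\lim_{d}s_{d+1}/s_{d}\big)^{n}$.

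The soft ingredients here are the reduction to $\int\phi^{d}\,d\lambda$, the Cauchy--Schwarz monotonicity, and the identification $\ell=S$; these I expect to go through routinely. The genuine obstacle is the last inequality $S\ge M^{n}$: it rests on being able to exhibit $n$ pairwise distinct points of $\operatorname{supp}\mu$ arbitrarily close to the right endpoint $M=\sup(\operatorname{supp}\mu)$, i.e. on the behaviour of $\mu$ near the top of its support — possessing $q\ge n+1$ support points located somewhere does not, by itself, put them near $M$. This is the step I would treat with the most care, invoking the structure of $\mu$ available in the situations of interest.
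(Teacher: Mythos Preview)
Your approach is essentially the paper's own: form the non-negative measure $\lambda=f\cdot\mu^{\otimes n}$ (the paper pushes it forward by $\phi(x)=x_1\cdots x_n$ to a measure $\eta$ on $[0,+\infty[$), recognise $\gamma_{d,n}(f)$ as the $d$-th $\phi$-moment of $\lambda$ (equivalently the $d$-th moment of $\eta$), and identify the limit of the moment ratios with the top of the support. The paper offloads that last identification to \cite{BY1}; you reprove it by the Cauchy--Schwarz monotonicity-and-comparison argument. Both proofs then reduce the lemma to the single equality $S=M^n$, where $M=\sup\operatorname{supp}\mu=\lim_{d}s_{d+1}/s_d$ and $S$ is the essential supremum of $\phi$ with respect to $\lambda$ (equivalently, $\sup\operatorname{supp}\eta$).

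Your reservations about the step $S\ge M^{n}$ are well placed, and the difficulty is not a matter of care but of content: under the stated hypotheses alone the lemma is actually false. Take $\mu$ uniform on $\{1,2,3\}$ (so $q=3$, $M=3$), $n=2\le q-1$, and $f(x_1,x_2)=(x_1-x_2)^2$, which is a polynomial positive on all pairwise distinct pairs but vanishing on the diagonal. Then $\gamma_{d,2}(f)=\tfrac{2}{9}\bigl(2^{d}+4\cdot3^{d}+6^{d}\bigr)$, so $\gamma_{d+1,2}(f)/\gamma_{d,2}(f)\to 6$, while $(s_{d+1}/s_d)^2\to 9$. The paper's proof hides the same gap behind the assertion that ``$t>0$ is in the support of $\eta$ if and only if $t^{1/n}$ is in the support of $\mu$'', which this example already contradicts ($6\in\operatorname{supp}\eta$ but $\sqrt{6}\notin\operatorname{supp}\mu$). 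Your argument does go through, exactly as you outline, under the additional assumption that $\operatorname{supp}\mu$ accumulates at $M$ (in particular whenever $M=+\infty$, or whenever $\mu$ has a density supported up to $M$, as in the Bergman and Fock examples treated later), or alternatively under the stronger positivity $f>0$ on all of $[0,+\infty[^{n}$; so the obstruction you isolate is genuinely the missing hypothesis rather than a flaw in the method.
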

\begin{proof} Let $\eta $ be the  image of the measure on $[0, +\infty[$ of $ f(x_1, \cdots, x_n)  d\mu(x_1) \cdots d\mu(x_n)$ under the map $(x_1, \cdots, x_n)   \mapsto x_1 \cdots x_n$.
Then 
\begin{eqnarray}\gamma_{d , n}(f) := \int_{0}^{+\infty} x^d  d\eta(x)
 \end{eqnarray} 
so that by \cite{BY1} we see that
\begin{eqnarray} 
 \lim_{d \to +\infty}  \frac{ \gamma_{d+1, n}(f)}{ \gamma_{d, n}(f)} =  R
  \end{eqnarray} 
  where $R $ is the supremum over all $t>0$ such that $t$ is in the support of $\eta.$ Moreover, it can be easily checked that if $t>0$, then $t$ is in the support of $\eta$ if and only if $t^{\frac{1}{n}}$ is in the support of $\mu.$
 This completes the proof.
\end{proof}
Now we prove Theorem A

\newtheorem*{proofA}{Proof of Theorem A}
\begin{proofA} {\rm 
We only need prove that for all non-negative real numbers $x$ and $y$, 
  the series
 \begin{eqnarray}\label{Ssq} S_{s, q}( \lambda)=  \sum_{n =1}^{q-1}  \sum_{m=0}^{+\infty} \lambda^m P_{m, n }(x)\overline{P_{m, n }(y)}, 
 \end{eqnarray} 
 converges uniformly on compact sets of $\Oms.$
 We shall distinguish two cases. First, assume that the support of $\mu$ is bounded; that is $R_s$ is finite.
 In view (\ref{orthog-poly}), the latter series can be written if the form
$$ S_{s, q}( \lambda) := \sum_{n =1}^{q-1}  \sum_{m=0}^{+\infty}  \lambda^m  \frac{D_{m, n}(x) D_{m, n}(y)}{D_{m, n-1} D_{m, n} }.$$
Using the integral expressions (\ref{integ-Dm}) and (\ref{integ-Dx}) with respect to the measure $r^m d\mu(r)$ 
instead of $d\mu(r)$,  we see that 
$   D_{m, n}(x )$   is  a finite sums of terms of the form $x^j  \gamma_{m , n}(f) $ where $j \in \N$ and $f$ is a function of the form
\begin{eqnarray} \label{f-form} f(x) = x_{1}^{k_1} \cdots x_{n}^{k_n}\prod_{1\leq j<k\leq n}(x_j-x_k)^2.
\end{eqnarray}
The same holds for $   D_{m, n}(y )$ with $y$ instead of $x$. Finally, we observe that
$$D_{m, n}  =  \gamma_{m , n}(g), \ \ \text{and} \ \ D_{m, n-1}  =  \gamma_{m , n-1}(g)$$
where
\begin{eqnarray} \label{g-form} g(x):= \prod_{1\leq j<k\leq n}(x_j-x_k)^2.
\end{eqnarray}
Therefore the series $S_{s, q}$ is a linear combination of series of the form
$$ S_{s, q, j, l}(  \lambda) := \sum_{n =1}^{q-1}  \sum_{m=0}^{+\infty}  \lambda^m x^{j} y^{l}
 \frac{\gamma_{m , n}(f)\gamma_{m , n}(h)}{ \gamma_{m , n}(g)\gamma_{m , n-1}(g)    }$$
 where $f$ and $h$ are of the form (\ref{f-form}) and $g$ is given by (\ref{f-form}).
 Appealing to Lemma \ref{radius1} and using D'Alembert's rule yields that the series $ S_{s, q, j, l}( \lambda) $ converges as long as $| \lambda| < R_s$. From this it is also clear that the series converges uniformly on compact sets of $\Oms.$ 
 
 Next, suppose that $R_s =+\infty.$  Let $x, y$ be arbitrary non-negative real  numbers. Then by Lemma \ref{infinite-support}, there $t_{x, y}$ such that 
 \begin{eqnarray*}\left | P_{m, n }(x) P_{m, n }(y) \right| \leq \frac{(n+1)!}{ t^m}, 
 \end{eqnarray*}
 for all $t\geq t_{x, y}$. This proves that the series (\ref{Ssq})
 convergence absolutely.
 
 Finally, the inequality in Theorem A follows by Cauchy-Schwarz inequality. The remaining equality in the theorem is straightforward.
 This completes the proof.}
\qed \end{proofA}

Next, we denote by ${\mathcal A}^2(s)$   the 
subspace of $L^2(\nu)$ consisting of all functions of the form 
 $$f(z) = \sum_{n=0}^{q-1}  \sum_{|m|=0}^{+\infty} a_{m, n} H_{m, n}(z)$$ on  $\Oms$ that satisfy 
$$    \sum_{n =1}^{q-1}  \sum_{m=0}^{+\infty}   \left|a_{m, n} \right|^2  < +\infty.$$
We equip the space  ${\mathcal A}^2(s)$ with the natural inner product 
\begin{eqnarray}  \langle f, g \rangle_{s} := \sum_{n=0}^{q-1}  \sum_{m=0}^{+\infty} a_{m, n}\overline{ b_{m, n}},
\end{eqnarray}
for all  members $f(z) = \sum_{n=0}^{q-1}  \sum_{m=0}^{+\infty} a_{m, n} H_{m, n}(z)$ and $  g(z) = \sum_{n=0}^{q-1}  \sum_{m=0}^{+\infty} b_{m, n} H_{m, n}(z)$ of ${\mathcal A}^2(s)$.
It is standard that this is a Hilbert space which contains all $q$-analytic polynomials, which is contained in $L^2(\nu)$ and its inner product coincides with the scalar product inherited from the  scalar product of $L^2(\nu)$.
Indeed, we have

\begin{thm}\label{kernel}  The space ${\mathcal A}^2(s)$  consists of   $q$-analytic functions and its reproducing
$K_{s,q}$  kernel is given by
 \begin{eqnarray*}  K_{s, q}( z, w)=  F_{s, q}(z\bar w, |z|^{2}, |w|^{2}),  \ z, w \in \Oms, \end{eqnarray*} 
 where $F_{s, q}$ is the function defined by (\ref{Fsq}).
\end{thm}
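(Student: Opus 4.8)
The plan is to verify the two defining features of a reproducing kernel Hilbert space: that point evaluations are bounded on $\mathcal{A}^2(s)$, and that $K_{s,q}(\cdot, w)$ reproduces. Since the orthonormal system $(H_{m,n})$ with $0 \le n \le q-1$, $m \ge 0$, is an orthonormal basis of $\mathcal{A}^2(s)$ by Lemma \ref{orthog} and the very definition of the space, the candidate reproducing kernel must be
$$K_{s,q}(z,w) = \sum_{n=0}^{q-1} \sum_{m=0}^{+\infty} H_{m,n}(z) \overline{H_{m,n}(w)},$$
provided this series converges and defines a function in $\mathcal{A}^2(s)$ in the first variable. So the first step is to show that each function $f \in \mathcal{A}^2(s)$ is genuinely $q$-analytic on $\Oms$: from the expansion $f = \sum a_{m,n} H_{m,n}$ one groups terms and writes, using $H_{m,n}(z) = \bar z^{\,n-m}\cdots$ when $m \le n$ and $H_{m,n}(z) = z^{\,m-n}\cdots$ when $m \ge n$, the partial sums as $\sum_{j=0}^{q-1}\bar z^{j}\phi_j(z)$ with each $\phi_j$ given by a power series in $z$; local uniform convergence of these power series on $\Oms$ (which will come from the same estimates that prove convergence of the kernel series) then yields that $f$ is $q$-analytic by the Almansi-type representation \eqref{almansi}.

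The second and central step is to identify the kernel series with $F_{s,q}(z\bar w, |z|^2, |w|^2)$. Write $z = r\xi$, $w = \rho\eta$ with $r,\rho \ge 0$ and $|\xi| = |\eta| = 1$. Plugging in the definition of $H_{m,n}$,
$$H_{m,n}(z)\overline{H_{m,n}(w)} = r^{|m-n|}\rho^{|m-n|}\,\xi^{m}\bar\xi^{\,n}\,\bar\eta^{\,m}\eta^{\,n}\,P_{|m-n|,\,m\wedge n}(r^2)\,P_{|m-n|,\,m\wedge n}(\rho^2),$$
and note $\xi^m\bar\xi^n\bar\eta^m\eta^n = (\xi\bar\eta)^{m-n} = (\xi\bar\eta)^{\,\mathrm{sgn}}\cdot|\ldots|$; more precisely one checks $\xi^{m}\bar\xi^{\,n}\bar\eta^{\,m}\eta^{\,n} = (\xi\bar\eta)^{m}(\bar\xi\eta)^{n}$, and since $|z|^2 = r^2$, $|w|^2 = \rho^2$, $z\bar w = r\rho\,\xi\bar\eta$, the summand depends on $z,w$ only through $z\bar w$, $|z|^2$, $|w|^2$. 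Splitting the double sum according to whether $m \ge n$ or $m < n$, setting $d = |m-n|$ and summing over the common value $k = m\wedge n$, the $m\ge n$ part contributes $\sum_{d\ge 0}(z\bar w)^d \sum_{n=0}^{q-1}P_{d,n}(|z|^2)P_{d,n}(|w|^2)$ — but $\sum_{n=0}^{q-1}P_{d,n}(x)P_{d,n}(y) = Q_{d,q-1}(x,y)$ by \eqref{rep-kern-poly}; the $m < n$ part contributes $\sum_{d\ge 1}(\overline{z\bar w})^d\sum_{n=d}^{q-1}P_{d,n-d}(|z|^2)P_{d,n-d}(|w|^2) = \sum_{d=1}^{q-1}(\bar z w)^d Q_{d,q-1-d}(|z|^2,|w|^2)$. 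This is exactly $F_{s,q}(z\bar w,|z|^2,|w|^2)$ from \eqref{Fsq}. Theorem A guarantees this rearrangement is legitimate and the series converges locally uniformly (it bounds precisely the building-block series $S_{s,q}$), so the interchange of summation is justified.

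The final step is the reproducing property: for $f = \sum a_{m,n}H_{m,n} \in \mathcal{A}^2(s)$ and fixed $w \in \Oms$, since $K_{s,q}(\cdot,w) = \sum_{m,n}\overline{H_{m,n}(w)}\,H_{m,n}(\cdot)$ lies in $\mathcal{A}^2(s)$ (its coefficient sequence is $(\overline{H_{m,n}(w)})$, square-summable by Theorem A applied with $x=y=|w|^2$), orthonormality of $(H_{m,n})$ gives $\langle f, K_{s,q}(\cdot,w)\rangle_s = \sum_{m,n} a_{m,n}H_{m,n}(w) = f(w)$. Boundedness of the evaluation functional then follows from Cauchy--Schwarz together with $\|K_{s,q}(\cdot,w)\|_s^2 = K_{s,q}(w,w) = F_{s,q}(|w|^2,|w|^2,|w|^2)$, matching the constant $C(K)$ in Theorem B. I expect the main obstacle to be purely bookkeeping: carefully matching the index ranges in the split double sum to the two sums defining $F_{s,q}$ — in particular getting the roles of $\lambda^d$ versus $\bar\lambda^d$ and the truncation $q-1-d$ in the second sum exactly right — and confirming that the partial sums of the $H$-expansion really do assemble into the Almansi form $\sum_{j}\bar z^{\,j}\phi_j(z)$ with holomorphic $\phi_j$, rather than merely being annihilated by $\partial^q/\partial\bar z^q$ formally. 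Everything analytic (convergence, interchange of sums, square-summability) is already supplied by Theorem A and Lemmas \ref{infinite-support} and \ref{radius1}.
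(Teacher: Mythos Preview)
Your proposal is correct and follows essentially the same approach as the paper: both arguments use that $(H_{m,n})$ is an orthonormal basis of $\mathcal{A}^2(s)$, split the kernel series $\sum H_{m,n}(z)\overline{H_{m,n}(w)}$ according to $m\ge n$ versus $m<n$, reindex via $d=|m-n|$ to recover $\sum_{d\ge 0}(z\bar w)^d Q_{d,q-1}$ and $\sum_{d=1}^{q-1}(\bar z w)^d Q_{d,q-1-d}$, invoke Theorem~A for convergence, and then read off the reproducing property and $q$-analyticity from uniform convergence on compacta. Your index bookkeeping is in fact cleaner than the paper's displayed computation (which contains a couple of evident typos in the intermediate lines), and your remark that the analytic input is entirely supplied by Theorem~A is exactly right---the references to Lemmas~\ref{infinite-support} and~\ref{radius1} are superfluous here, since they feed into Theorem~A rather than into this proof directly.
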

\begin{proof}  By virtue of  Theorem A, 
the series
\begin{eqnarray} \label{ksq} K_{s, q}( z, w)=  \sum_{n =0}^{q-1}  \sum_{m=0}^{+\infty} H_{m, n }(z)\overline{H_{m, n }(w)}. 
 \end{eqnarray}
 converges uniformly for $z\bar w $ lying in a  compact subset of  $\Oms.$
  Since the system   $(H_{m, n }), m, n \in \N, n \leq q-1$ forms  an orthonormal basis of ${\mathcal A}^2(s)$,  a little computing shows that
 $$\aligned K_{s, q}( z, w) & =  \sum_{n =0}^{q-1}  \sum_{m=0}^{+\infty} H_{m, n }(z)\overline{H_{m, n }(w)} \\
& = \sum_{n =0}^{q-1}  \sum_{m=0}^{+\infty} (z\bar w)^m P_{m, n }(|z|^{2})P_{m, n }(|w|^{2})  + \sum_{n =1}^{q-1}  \sum_{m=0}^{n -1} (z\bar w)^{m} P_{n -m, m }(|z|^{2})P_{m, n }(|w|^{2}) \endaligned
$$
When $q=1,$ we are in the analytic case. Since  $P_{m, 0 }$ is constant, the latter sum gives
$$K_{s, 1}( z, w) =   \sum_{m=0}^{+\infty} (z\bar w)^{m} P_{m, 0 }P_{m, 0 }.$$  
However, , when $q\geq 2,$  we have
$$\aligned K_{s, q}( z, w)
& =  \sum_{m=0}^{+\infty} (z\bar w)^{m}  \sum_{n =0}^{q-1} P_{m, n }(|z|^{2})P_{m, n }(|w|^{2})  +   \sum_{n =1}^{q-1}  \sum_{m=0}^{n -1} (\bar z w)^{n-m} P_{n -m, m }(|z|^{2})P_{n -m, m  }(|w|^{2}) \\
&=  F_{s, q}(z\bar w, |z|^{2}, |w|^{2}).
 \endaligned
 $$
 Now each element  $f$ of  ${\mathcal A}^2(s)$  admits a unique representation
$$f(z) = \sum_{n =0}^{q-1}  \sum_{m=0}^{+\infty} a_{m, n }H_{m, n }(w).$$
By Cauchy-Schwarz inequality, it follows that this series converges uniformly on compact sets of $\Oms$ and hence it defines a $q$-analytic function.
Moreover, it can be easily checked
$$f(z) = \langle f,  K_{s, q}( \cdot, z) \rangle.$$
for all $z\in \Oms.$
 This completes the proof.

\end{proof}

Now we are ready to  prove Theorem B
\newtheorem*{proofB}{Proof of Theorem B}
\begin{proofB} {\rm 
 It suffices to show that each $q$-analytic function which belongs to $L^2(\nu)$ is an element of the space ${\mathcal A}^2(s)$. Now 
 let $f$ be   $q$-analytic function which belongs to $L^2(\nu)$. By~\eqref{almansi} we know 
that $f$ has a unique representation of the form
\begin{eqnarray*} \label{pa-rep}  f(z) =  \sum_{n =0}^{q-1} \overline{z}^n f_n (z), \ z \in \Oms,
 \end{eqnarray*}
where the functions $f_n$ are analytic on $\Oms.$  Therefore,   $f$ can written in the form 
 \begin{eqnarray*} \label{papol-rep}  f(z) =  \sum_{n =0}^{q-1}  \sum_{m =0}^{+\infty}\overline{z}^n f_m (z), \ z \in \Oms,
 \end{eqnarray*} 
 where $f_m$ are anlytic polynomials and the series  converges uniformly on compact sets of $\Oms $.  In view of Theorem A, we see  that  $f$ admits a unique representation of the 
 \begin{eqnarray*} \label{papol-rep}  f(z) =  \sum_{n =0}^{q-1}  \sum_{m =0}^{+\infty} c_{m, n} H_{m, n}, \ z \in \Oms,
 \end{eqnarray*} 
 where $c_{m, n}$ are complex coefficients and the series  converging uniformly on compact sets of $\Oms. $ 
  Since $f$ is in $L^2(\nu)$ it follows that 
  $$\sum_{n =0}^{q-1}  \sum_{m =0}^{+\infty} |c_{m, n}|^2 < +\infty$$ 
  showing that 
$f\in {\mathcal A}^2(s)$. 
The proof is now complete.}
\qed \end{proofB}

\section{The polyanalytic Bergman space on the unit disc}
In this section apply our approach to different classes of orthogonal polynomials to produces  natural examples of Hilbert spaces of  polyanalytic functions.  

We start with  the weighted  polyanalytic Bergman space on $\D$.  Consider  the weighted   Lebesgue measure on $\D$ given by
 $$dA_{\alpha}(z) := \left(1-|z|^2\right)^{\alpha}\frac{dA(z)}{\pi}, \ \ \alpha > -1,$$ where $dA(z)$ is the Lebesgue measure on $\D.$  We denote by ${\mathcal A}^{\alpha}_{q} (\D)$, the weighted $q$-polyanalytic Bergman space on $\D$ where  $q\in \N$ and  $ \alpha > -1$. This is the space of all $q$-polyanalytic functions $f$ on $\D$ which are square integrable with respect to $dA_{\alpha}(z).$

It can be easily checked that the measure $\nu$ is   the  image measure in $\D$  of $\mu \otimes \sigma $ under the map $(t, \xi) \mapsto \sqrt{t}\xi$ from $[0, 1[\times \T$ onto $\D$ where $\mu$ is the measure $[0, 1[$ given by  $$d\mu(t) := \left(1-t\right)^{\alpha}dt.$$
 The corresponding  moment sequence is
\begin{equation}
s_{d}=\int_{0}^{1}t^{d}\left(1-t\right)^{\alpha}dt =\frac{\Gamma\left(d+1\right)\Gamma\left(\alpha +1\right)}{\Gamma\left(d+\alpha+2\right)}.
\end{equation}

\begin{lemma}\label{tr-rule}  Suppose that $\varphi$ is an automorphism of the unit disc. 
The the Bergman kernel $K_{q, \alpha}$ of ${\mathcal A}^{\alpha}_{q} (\D)$ follows the transformation rule
\begin{eqnarray}  K_{q, \alpha}(z, \xi) = \frac{\left( \varphi'(z) \overline{\varphi'(\xi)}\right)^{(\alpha +q+1)/2}   }{ \left(\overline{\varphi'(z)} \varphi'(\xi)\right)^{(q-1)/2} }K_{q, \alpha}(\varphi(z), \varphi(\xi))
\end{eqnarray} 
for all $z, \xi \in \D.$
 \end{lemma}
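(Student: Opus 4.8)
The plan is to realise the automorphism $\varphi$ by a weighted composition operator that is a unitary self-map of $\mathcal A^\alpha_q(\D)$, and then to read the transformation rule off from the standard behaviour of a reproducing kernel under a unitary change of variables. Recall first that, by Theorem B applied to $\nu=\nu_\alpha$, the space $\mathcal A^\alpha_q(\D)$ is a reproducing kernel Hilbert space, so $K_{q,\alpha}$ is well defined. Write $\varphi(w)=e^{i\theta}\dfrac{a-w}{1-\bar a w}$ with $a\in\D$, $\theta\in\R$, so that $\varphi'(w)=-e^{i\theta}\dfrac{1-|a|^2}{(1-\bar a w)^2}$ is zero-free on $\D$; fix holomorphic branches of its (generally non-integer) powers. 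Put
$$m(w):=\varphi'(w)^{(\alpha+q+1)/2}\,\overline{\varphi'(w)}^{-(q-1)/2}\qquad\text{and}\qquad Vf:=m\cdot(f\circ\varphi).$$

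The first --- and only genuinely non-formal --- step is to check that $V$ maps $\mathcal A^\alpha_q(\D)$ into itself. Write $m=h\,\overline g$, where $h(w):=\varphi'(w)^{(\alpha+q+1)/2}$ and $g(w):=\varphi'(w)^{-(q-1)/2}$ are holomorphic and zero-free, and observe that $g(w)=\kappa\,(1-\bar a w)^{q-1}$ for some constant $\kappa$. If $f$ is $q$-analytic, its Almansi decomposition gives $f(z)=\sum_{j=0}^{q-1}\bar z^{\,j}\phi_j(z)$ with $\phi_j$ holomorphic, hence
$$Vf(w)=\sum_{j=0}^{q-1}\overline{g(w)\,\varphi(w)^{j}}\,\bigl(h(w)\,\phi_j(\varphi(w))\bigr).$$
The key observation is that for $0\le j\le q-1$ the function $g(w)\varphi(w)^{j}=\kappa\,e^{ij\theta}(a-w)^{j}(1-\bar a w)^{q-1-j}$ is a polynomial in $w$ of degree at most $q-1$; hence $\overline{g(w)\varphi(w)^{j}}$ is a polynomial in $\bar w$ of degree at most $q-1$, and since $h\cdot(\phi_j\circ\varphi)$ is holomorphic, $Vf$ is again a polynomial of degree at most $q-1$ in $\bar w$ with holomorphic coefficients, i.e. $q$-analytic.

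Next I would show that $V$ is an $L^2(\nu_\alpha)$-isometry: with the identities $1-|\varphi(w)|^2=|\varphi'(w)|(1-|w|^2)$, $dA(\varphi(w))=|\varphi'(w)|^2\,dA(w)$ and $|m(w)|^2=|\varphi'(w)|^{\,\alpha+q+1-(q-1)}=|\varphi'(w)|^{\,\alpha+2}$, the substitution $z=\varphi(w)$ yields $\|Vf\|_{L^2(\nu_\alpha)}=\|f\|_{L^2(\nu_\alpha)}$; combined with the previous paragraph, $Vf\in\mathcal A^\alpha_q(\D)$ whenever $f\in\mathcal A^\alpha_q(\D)$. Carrying out the same construction with $\varphi^{-1}$ in place of $\varphi$ produces a map $\widetilde V$ which, because $1/m(\varphi^{-1}(w))$ is exactly the multiplier built from $(\varphi^{-1})'$ (with the compatible branches), is a two-sided inverse of $V$ and also preserves $\mathcal A^\alpha_q(\D)$. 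Thus $V$ is a surjective isometry, i.e. a unitary self-map of $\mathcal A^\alpha_q(\D)$.

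Finally, for any unitary of the form $Vf=m\cdot(f\circ\varphi)$ on a reproducing kernel Hilbert space with kernel $K$, testing $\langle Vf,K(\cdot,\xi)\rangle=(Vf)(\xi)=m(\xi)\langle f,K(\cdot,\varphi(\xi))\rangle$ against all $f$ gives $V^{-1}K(\cdot,\xi)=\overline{m(\xi)}\,K(\cdot,\varphi(\xi))$, whence, applying $V$,
$$K(z,\xi)=m(z)\,\overline{m(\xi)}\,K(\varphi(z),\varphi(\xi))=\frac{\bigl(\varphi'(z)\,\overline{\varphi'(\xi)}\bigr)^{(\alpha+q+1)/2}}{\bigl(\overline{\varphi'(z)}\,\varphi'(\xi)\bigr)^{(q-1)/2}}\,K(\varphi(z),\varphi(\xi)),$$
which is the asserted rule. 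The point requiring care is the first step: a bare composition $f\mapsto f\circ\varphi$ does \emph{not} preserve $q$-analyticity, and it is precisely the Möbius factor $\overline g=\bar\kappa(1-a\bar w)^{q-1}$ hidden in $m$ that absorbs the powers of $\overline{\varphi(w)}$ and restores a dependence on $\bar w$ of degree $\le q-1$; getting that bookkeeping (and the compatible choice of branches for the half-integer powers) right is essentially the whole content of the argument.
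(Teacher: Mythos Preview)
Your proof is correct and follows the same underlying idea as the paper's: the map $Vf=m\cdot(f\circ\varphi)$ with $m(w)=\varphi'(w)^{(\alpha+q+1)/2}\overline{\varphi'(w)}^{-(q-1)/2}$ is an isometry of $\mathcal A^\alpha_q(\D)$, and the kernel identity is the standard consequence. The paper reduces to the involutive case $\varphi\circ\varphi=\mathrm{id}$, then simply \emph{observes} that $z\mapsto m(z)K_{q,\alpha}(\varphi(z),\xi)$ lies in $\mathcal A^\alpha_q(\D)$ and obtains the identity by two applications of the reproducing property together with a change of variables. Your version supplies precisely the step the paper leaves implicit: via the Almansi decomposition and the factorisation $g(w)\varphi(w)^j=\kappa e^{ij\theta}(a-w)^j(1-\bar a w)^{q-1-j}$ you actually verify that $V$ preserves $q$-analyticity, and you then deduce the transformation rule from the general principle for unitary weighted composition operators on a reproducing kernel Hilbert space. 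The explicit check of $q$-analyticity is the real content here and is a genuine improvement in completeness; the paper's double--reproducing--property manipulation is a bit shorter but hides this point.
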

\begin{proof} It is sufficient to  assume that $\varphi \circ \varphi (z) = z, $ for all $z\in \D.$ We recall the that the measure 
$\frac{d A(z) }{(1-|z|^2)^2}$ is invariant under the action of the automorphism group of the unit disc.
We also observe that for any fixed $\xi \in \D,$ the function 
$z\mapsto \frac{{(\varphi')(z))}^{(\alpha +q+1)/2}}{{(\overline{\varphi'(z)})}^{(q-1)/2}}  K_{q, \alpha}(\varphi(z), \xi)$ is an element of ${\mathcal A}^{\alpha}_{q} (\D)$.
By the reproducing property and change of variables formula we see that
$$\aligned
\frac{\left( \varphi'(z)\right)^{(\alpha +q+1)/2}}{ \left(\overline{\varphi'(z)}\right)^{(q-1)/2} }K_{q, \alpha}(\varphi(z), \xi)
&= \int_{\D} \frac{\left( \varphi'(w)\right)^{(\alpha +q+1)/2}}{ \left(\overline{\varphi'(w)}\right)^{(q-1)/2} } K_{q, \alpha}(\varphi(w), \xi)K_{q, \alpha}(z,w) dA_{\alpha}(w) \\
&=  \int_{\D}
 \frac{ \left(\overline{\varphi'(w)}\right)^{(\alpha +q+1)/2} }{\left( \varphi'(w)\right)^{(q-1)/2}} K_{q, \alpha}(w,\xi)
 K_{q, \alpha}(z, \varphi(w)) dA_{\alpha}(w) 
\\
&= \overline{ \int_{\D}
 \frac{ \left(\varphi'(w)\right)^{(\alpha +q+1)/2} }{\left(\overline{ \varphi'(w)}\right)^{(q-1)/2}} K_{q, \alpha}(\xi, w)
 K_{q, \alpha}(\varphi(w), z) dA_{\alpha}(w) }  \\
& = 
\frac{\left(\overline{\varphi'(\xi)} \right)^{(\alpha +q+1)/2}}{ \left(\varphi'(\xi)\right)^{(q-1)/2} }K_{q, \alpha}(z, \varphi(\xi))
\endaligned
$$
Replacing $\xi$ by $\varphi(\xi)$ the latter equalities yield
$$\aligned
\frac{\left( \varphi'(z) \overline{\varphi'(\xi)}\right)^{(\alpha +q+1)/2}   }{ \left(\overline{\varphi'(z) } \varphi'(\xi)\right)^{(q-1)/2} }K_{q, \alpha}(\varphi(z), \varphi(\xi))
&= 
K_{q, \alpha}(z, \xi)
\endaligned
$$
This completes the proof.
\end{proof}

 We shall make use of  the classical  Jacobi polynomials $P^{(\alpha, d)}_{n}$ with parameters $(\alpha, d)$ and degree $n.$   An explicit formula for these polynomials is given by

\begin{eqnarray}\label{jacobi}   P^{(\alpha, d)}_{n}(x) =   \frac{1}{2^{n}} \sum_{k=0}^{n} \binom{\alpha + n}{k} 
\binom{d+ n }{ n-k}(x-1)^{ n-k} (x+1)^{k}.
\end{eqnarray}
It is well-known by formula (3.96) in (\cite{STW}, p. 71) that these polynomials verify the equality 
\begin{eqnarray}\label{jacobi}   P^{(\alpha, d)}_{n}(1-2x) =  
 \frac{\Gamma(n+\alpha +1)}{ n!\Gamma(n+\alpha +d+1)}\sum_{j=0}^{n} (-1)^{j} \binom{n}{j} 
  \frac{\Gamma(n+j+\alpha+ d+1)}{ \Gamma(j+\alpha +1)} x^j.
\end{eqnarray}

The Jacobi  polynomials satisfy the orthogonality condition

\begin{eqnarray}\label{orthog-jacobi}   \int_{0}^{1}P^{(\alpha, d)}_{n}(2x-1)P^{(\alpha, d)}_{n'}(2x-1)    x^d (1-x)^\alpha d x =  \delta_{n,n'} h^{\alpha,  d}_{ n} \end{eqnarray}
where
\begin{eqnarray}\label{hmd  }h^{\alpha,  d}_{ n} := \frac{\Gamma\left(\alpha +n+1)  \right)\Gamma\left( d+n+1 \right)}{ \Gamma\left(\alpha +d+n+1  \right)\left( \alpha +d+2n+1 \right)}.
\end{eqnarray}
and hence  for each non-negative integer $d$, the  reproducing kernel of space of polynomials of degree at most $q-1$ with respect to the $L^2$-inner product  associated to the measure $ t^d d\mu(t)$ is then
$$
\aligned
Q_{d,q-1}\left(x, y\right) & =  \sum_{n =0}^{q-1}   \frac{ 
P_{n}^{\alpha, d}(2x-1) P_{n}^{\alpha, d}(2y-1)}{h^{\alpha,  d}_{n}}\\
 & =  \sum_{n =0}^{q-1}   \frac{ 
P_{n}^{d, \alpha}(1-2x) P_{n}^{d, \alpha}(1-2y)}{h^{\alpha,  d}_{n}}.
\endaligned
$$
By the identity (3.114) in (\cite{STW}, p. 75)   we see that 

\begin{eqnarray}\label{Akj}  
Q_{0, q-1}\left(x, 0\right)
   &  =  \frac{\Gamma(q+ \alpha +1)  )}{  (q-1)! \Gamma(\alpha +1 )}
  P_{q-1}^{1, \alpha}(1-2x) 
    \end{eqnarray}
   so that by  (\ref{jacobi}) we obtain
    
 $$\aligned
 F_{q, s} (0,  x, 0 ) & =  Q_{0,q-1}\left(x, 0\right)   \\
 &  =  q  \binom{\alpha + q -1}{\alpha}  \sum_{j=0}^{q-1} (-1)^{j} \binom{q-1}{j} \binom{\alpha +q+j}{\alpha +q-1} 
    x^j 
 \endaligned
$$
We observe that if $z\in \D,$ then $K_{q, \alpha}(z, 0) = F_{q, s} (0,  |z|^2, 0 ).$
Let $z, w\in \D $ let 
$$\varphi_{w}(z) := \frac{z-w}{1-z\bar w}.$$
 By Lemma \ref{tr-rule}, we have
\begin{eqnarray*}  K_{q, \alpha}(z, w) = \frac{\left( \varphi_{w}'(z) 
\overline{\varphi_{w}'(w)}\right)^{(\alpha +q+1)/2}   }{ \left(\overline{\varphi_{w}'(z)} \varphi_w'(w)\right)^{(q-1)/2} }K_{q, \alpha}(\varphi_w(z), 0)
\end{eqnarray*} 
 Since 
 
$$\aligned \frac{\left( \varphi_{w}'(z) \overline{\varphi_{w}'(w)}
\right)^{(\alpha +q+1)/2}   }{ \left(\overline{\varphi_{w}'(z)} \varphi_w'(w)\right)^{(q-1)/2} }
 &= \frac{(1- \bar z w)^{q-1}}{(1- z\bar w)^{\alpha +q+1}}
\endaligned
$$
and

$$\aligned  |\varphi_{w}(z)|^{2j}  = \frac{ [z-w|^{2j} }{ |1 - z\bar w|^{2j}  } 
\endaligned
$$
it follows  that
\begin{eqnarray*}  K_{q, \alpha}(z, w)  & =  
 q  \binom{\alpha + q-1}{\alpha}  \frac{(1- \bar z w)^{q-1}}{(1- z\bar w)^{\alpha +q+1}} 
  \sum_{j=0}^{q-1} (-1)^{j} \binom{q-1}{j} \binom{\alpha +q+j}{\alpha +q-1}    \frac{ |z-w|^{2j} }{ |1 - z\bar w|^{2j}  }.
\end{eqnarray*} 

\section{Weighted polyanalytic Fock spaces }
The second example is the weighted measure defined on $\mathbb{C}$ by
\begin{equation*}
d\nu \left(z\right):=|z|^{2\alpha}  e^{-\left|z\right|^{2}}dA\left(z\right), \ \ \alpha>-1,
\end{equation*}
where $dA\left(z\right)$ is the normalized Lebesgue measure on $\mathbb{C}$. We denote by ${\mathcal A}^{\alpha}_{q} (\C)$ the weighted $q$-polyanalytic Fock space on $\C$ where  $q$ is a positive integer. This is the space of all $q$-analytic functions $f$ on $\C$ which are square integrable with respect to $d\nu(z).$
The measure $\nu$ is the image measure in $\C$ of $\mu \otimes \sigma $ under the 
map $(t, \xi) \mapsto t^{1/2}\xi$ from $[0, +\infty[\times \T$ onto $\C$ where $\mu$ is the measure on $[0, +\infty[$ given by 
 $$d\mu(t) :=  \frac{1}{\Gamma(\alpha +1)} t^{\alpha} e^{-t}dt.$$
 The corresponding  moment sequence is 
\begin{equation}
s_{d}=\int_{0}^{+\infty}t^{d}e^{-t}dt=\frac{\Gamma\left( \alpha +d+1\right)}{\Gamma(\alpha +1) }.
\end{equation}
We will use the classical weighted Laguerre polynomials $L_{n}^{\alpha}$ of degree $n$ and weight $\alpha
$. These polynomials satisfy,
\begin{equation}
\int_{0}^{+\infty}L_{n}^{d+\alpha}\left(x\right)L_{n'}^{d+\alpha}\left(x\right)x^{d+\alpha}e^{-x}dx=\frac{\Gamma\left(d+n+1\right)}{n!}\delta_{n,n'}.
\end{equation}
They have the following explicit representation
\begin{equation} 
L_{n}^{d+\alpha}\left(x\right)  =\sum_{l=0}^{n}\frac{\left(n+d+\alpha\right)!}{r!\Gamma\left(n+d+\alpha +1-r\right)}\frac{\left(-x\right)^{n-r}}{\left(n-r\right)!}.
\end{equation}
We point out some useful formulas, the first one is due to Bailey \cite{Bail}
\begin{equation} \label{bailey}
L_{n}^{d+\alpha}\left(x\right)L_{n}^{d+\alpha}\left(y\right)=\frac{\Gamma\left(d+\alpha+n+1\right)}{n!}\sum_{l=0}^{n}\frac{\left(xy\right)^{n-l}L_{l}^{d+\alpha+2n+-2l}\left(x+y\right)}{\left(n-l\right)!\Gamma\left(d+\alpha+n+1-l\right)}.
\end{equation}
The others are 
\begin{equation}\label{laguerre1}
L_{n}^{d+\alpha}\left(x-y\right)=\sum_{r=0}^{n}\frac{y^{r}}{r!}L_{n-r}^{d+\alpha+r}\left(x\right). 
\end{equation}
\begin{equation}\label{laguerre2}
L_{n}^{d+\alpha}\left(x-y\right)=e^{-y}\sum_{r=0}^{+\infty}\frac{y^{r}}{r!}L_{n}^{d+\alpha+r}\left(x\right)
\end{equation}
which are easy consequences of the definition of $L_{n}^{d+\alpha}\left(x\right)$.
To compute the series $F_{q,s}\left(\lambda,x,y\right)$ in this case, it is sufficient to calculate the following expression
\begin{equation}
S_{\alpha,q}(\lambda)=\sum_{n=0}^{q-1}\sum_{d=-n}^{+\infty}\frac{n!\lambda^{d}}{\Gamma\left(n+d+\alpha +1\right)}L_{n}^{d+\alpha}\left(x\right)L_{n}^{d+\alpha}\left(y\right).
\end{equation}
using first (\ref{bailey}) and then (\ref{laguerre1}) and (\ref{laguerre2}) we have
$$ \aligned
S_{\alpha,q}(\lambda)&=\sum_{n=0}^{q-1}\lambda^{-n}n!\sum_{d=n}^{+\infty}\frac{\lambda^{d+n}L_{n}^{d+\alpha}\left(x\right)L_{n}^{d+\alpha}\left(y\right)}{\Gamma\left(d+\alpha+n +1\right)}
 \\
& =\sum_{n=0}^{q-1}\lambda^{-n}n!\sum_{d=0}^{+\infty}\frac{\lambda^{d}}{\Gamma\left(d+\alpha+1\right)}L_{n}^{d+\alpha-n}\left(x\right)L_{n}^{d+\alpha-n}\left(y\right)
\\
&=\sum_{n=0}^{q-1}\lambda^{-n}\sum_{d=0}^{+\infty}\lambda^{d}\sum_{r=0}^{n}\frac{\left(xy\right)^{n-r}L_{r}^{d+\alpha+n-2r}\left(x+y\right)}{\left(n-r\right)!\Gamma\left(d+\alpha-r+1\right)}
\\
&=\sum_{n=0}^{q-1}\lambda^{-n}\sum_{r=0}^{n}\frac{\left(xy\right)^{n-r}\lambda^{r}}{\left(n-r\right)!}\sum_{d=r}^{+\infty}\frac{\lambda^{d-r}}{\Gamma\left(d+\alpha-r+1\right)!}L_{r}^{d+\alpha+n-2r}\left(x+y\right)
\\
&=\sum_{n=0}^{q-1}\lambda^{-n}\sum_{r=0}^{n}\frac{\left(xy\right)^{n-r}\lambda^{r}}{\left(n-r\right)!}\sum_{d=0}^{+\infty}\frac{\lambda^{d}}{\Gamma\left(d+\alpha+1\right)}L_{r}^{d+\alpha+n-r}\left(x+y\right)
\\
&=\sum_{n=0}^{q-1}\lambda^{-n}\sum_{r=0}^{n}\frac{\left(xy\right)^{n-r}\lambda^{r}}{\left(n-r\right)!}e^{\lambda}L_{r}^{\alpha+ n-r}\left(x+y-\lambda\right)
\\
&=\sum_{n=0}^{q-1}e^{\lambda}L_{n}^{\alpha}\left(x+y-\lambda-\frac{xy}{\lambda}\right)
\endaligned$$
and since $\sum_{r=0}^{q-1}L_{r}^{\beta}=L_{q-1}^{\beta+1}$ we deduce that
\begin{equation}
F_{q,s}\left(\lambda,x,y\right)=e^{\lambda}L_{q-1}^{\alpha+1}\left(x+y-\lambda-\frac{xy}{\lambda}\right).
\end{equation}

\section{The higher dimensional case}
 Consider $p$ Stieltjes moment sequences $s(1), \cdots , s(p) \in  \etSq$ and for each $j$ let $\mu_j \in \cM(s(j))$ and  denote by $\nu_j $ denote the  image measure on $\C$  of $\mu_j \otimes \sigma $ under the map $(t, \xi) \mapsto \sqrt{t}\xi$ from $[0, +\infty[\times \T$ onto $\C$,  where $\sigma$  is the rotation invariant probability measure on the unit circle $\T$ in $\C$.   Then the support of each $\nu_j$ is contained in the closure of the disc $\D_j$ centred at $0$ with radius $R_{s(j)} $.  
  Then  we set
 $\nu := \nu_1 \otimes \cdots \otimes \nu_p$   and
  consider the Hilbert space $L^2(\nu)$ of square integrable
complex-valued functions  in $\overline{\D}_1 \times \cdots \times \overline{\D}_p$ with respect to the measure
$\nu$.
We denote by ${\mathcal A}_{\nu, q}^{2}$   the 
 space  of those $q$-analytic functions on $\D_1 \times \cdots \times \D_p$  which are square integrable with respect to $\nu.$
The natural inner product inherited from that of $L^2(\nu)$ turns ${\mathcal A}^{2}_{\nu, q}$ into a pre-Hilbert space.
  We are now prepared to state the higher dimensional analog of Theorem B.
\newtheorem*{thB'}{\bf Theorem B'}  
\begin{thB'}{\it   The space  ${\mathcal A}_{\nu,q}^2$ is Hilbert space which coincides with the closure  of the $q$-analytic polynomials in  $L^2(\nu)$. Moreover, 
  for each  set compact $K \subset  \Oms$ we have that
 $$ {\sup}_{z \in K} {|f(z)|}
\leq C \parallel f \parallel _{L^2(\nu)}$$
 for all    $q$-analytic  polynomials $f \in L^2(\nu) $, where
 $$C=C(K) := {\sup}_{z\in K}   \prod_{j=1}^n\sqrt{F_{q, s(j)} (|z_j|^{2},  |z_j|^{2}, |z_j|^{2} )}.$$
  Furthermore, 
the reproducing kernel  of   ${\mathcal A}_{\nu}^2$    is given by
$$R_{\nu, q}(z, w) = \prod_{j=1}^p F_{q, s(j)} (z_j\bar w_j, |z_j|^{2}, |w_j|^{2}),  \ \ z, w \in \Oms.$$ }
\end{thB'}

\newtheorem*{RemC'}{\bf Remark C'}   
\begin{RemC'}{\rm    As in Remark C, in the one variable case, when the measures $\mu_j$ have a finite support with exacly $q$ elements, provides the  polyanalytic Cauchy type kernel of the unit 
polydisc $ \D^n:= \{z= (z_1, \cdots, z_n)\in \C :  \max_{j=1, \cdots, n}|z_j| <1\}$. }
\end{RemC'}

In a similar manner, we obtain the weighted polyanalytic Bergman kernel of the unit 
polydisc $\{z= (z_1, \cdots, z_n)\in \C :  \max_{j=1, \cdots, n}|z_j| <1\}$. More precisely,  for $\alpha >-1,$ we consider the  space ${\mathcal A}_{\alpha, n, q}^2$ of all square integrable of  $q$-analytic functions  with respect to the  measure $ d\nu_{\alpha, n}(z) :  \frac{1}{\pi^n} \prod_{j=1}^n(1-|z_j|^2)^\alpha  dV(z), $ where $dV(z)$ is the Lebesgue measure on $\Cn$. We will obtain the following
\newtheorem*{thD'}{\bf Theorem D'}  
\begin{thD'}{\it   The space ${\mathcal A}_{\alpha, n, q}^2$  is Hilbert space which coincides with the closure  of the $q$-analytic polynomials in  $L^2(\nu_\alpha )$ and its  reproducing kernel    is given by
\begin{eqnarray*}  K_{\alpha, n, q}(z, w)  & =  \prod_{j=1}^n K_{\alpha,  q}(z_j, w_j)  
\end{eqnarray*} 
for all $z =(z_1, \cdots, z_n) , w  =(w_1, \cdots, w_n) \in \D^n$, where  $K_{\alpha,  q}$ is the
weighted polyanalytic Bergman kernel of the unit disc. }
\end{thD'}

We also obtain by similar arguments the weighted polyanalytic Bergman kernel for the weighted Fock space in $\Cn$. Namely, let $\alpha >0,$ and denote by
${\mathcal F}_{\alpha, q}(\C)$  the  space ${\mathcal A}_{\alpha, q}^2$ of all square integrable of  $q$-analytic functions  with respect to the  measure $$ d\nu_\alpha(z) : |z|^{2\alpha} e^{- |z|^2} \frac{dV(z)}{\pi^n}, \alpha>-1$$ where $dA(z)$ is the Lebesgue measure on $\Cn$ We will establish the following
\newtheorem*{thE'}{\bf Theorem E'}  
\begin{thE'}{\it   The space ${\mathcal F}_{\alpha, q}$  is Hilbert space which coincides with the closure  of the $q$-analytic polynomials in  $L^2(\nu_\alpha )$ and its  reproducing kernel    is given by
\begin{eqnarray*}  K_{\alpha, n, q}(z, w)  & =  e^{\langle z\bar w \rangle }\prod_{j=1}^n L_{q-1}^{\alpha+1}\left(|z_j- w_j|^2\right)
\end{eqnarray*} 
for all $z=(z_1, \cdots, z_n) , w  =(w_1, \cdots, w_n)\in \Cn$. }
\end{thE'}

\end{document}